\documentclass[twoside,12pt]{article}
\usepackage[utf8]{inputenc}
\usepackage{mathrsfs}
\usepackage{amssymb,amsmath,amsthm}
\usepackage{graphicx}
\usepackage{color}
\usepackage{tikz}
\usepackage[top=2cm,bottom=2cm,left=1.8cm,right=1.8cm]{geometry}
\usepackage{float,caption,subcaption}
\usepackage{diagbox}
\usepackage{algorithm}
\usepackage{algpseudocode}

\usepackage[colorlinks,linkcolor=blue,anchorcolor=blue,citecolor=blue]{hyperref}
\input{epsf}

\newcommand{\bd}{\begin{description}}
\newcommand{\ed}{\end{description}}
\newcommand{\bi}{\begin{itemize}}
\newcommand{\ei}{\end{itemize}}
\newcommand{\be}{\begin{enumerate}}
\newcommand{\ee}{\end{enumerate}}
\newcommand{\beq}{\begin{equation}}
\newcommand{\eeq}{\end{equation}}
\newcommand{\beqs}{\begin{eqnarray*}}
\newcommand{\eeqs}{\end{eqnarray*}}

\definecolor{DarkGreen}{rgb}{0.2,0.6,0.3}

\usepackage{multicol}
\usepackage{multirow}
\usepackage{makecell}

\newtheorem{theorem}{Theorem}[section]

\newtheorem{lemma}{Lemma}[section]
\newtheorem{definition}{Definition}[section]
\newtheorem{corollary}[theorem]{Corollary}

\newtheorem{proposition}{Proposition}[section]

\newtheorem{problem}{Problem}

\begin{document}
\title{\textbf{Multiplicity for Partially Ordered Sets\footnote{The work was supported by the Hungarian National Research, Development and Innovation Office NKFIH (No.~SSN135643 and K132696); the National Science Foundation of China (Nos. 12471329 and 12061059).}}}

\author{Gyula O.H. Katona\thanks{HUN-REN Alfr\'ed R\'enyi Institute of Mathematics, Budapest Reaaltanoda utca 13--15, 1053, Hungary. {\tt katona.gyula.oh@renyi.hu}},\ \ \ Yaping Mao\footnote{Corresponding author: Academy of Plateau Science and Sustainability, and School of Mathematics and Statistics, Qinghai Normal University, Xining, Qinghai 810008, China. {\tt yapingmao@outlook.com; myp@qhnu.edu.cn}}}
\date{}
\maketitle

\begin{abstract}
Let $\mathcal Q=\{Q_a:a\geq1\}$ be a nested family of finite posets such that $Q_a\subseteq Q_{a+1}$ and $|Q_a|<|Q_{a+1}|$. For a poset $Q$, let $\mathcal C_t(Q)$ denote the set of all strict $t$-chains in $Q$. Given an $r$-coloring of $\mathcal C_t(Q_a)$ and posets $P_1,\ldots,P_r$, a weak copy of $P_i$ is called monochromatic of color $i$ if all $t$-chains in the copy have color $i$; the strong version is defined in the same way for induced copies. The corresponding weak and strong multiplicity parameters are the minimum possible total number of such monochromatic copies in the host poset.
For the Boolean lattice $B_n$, define
$E_n={(S,T,U)\in B_n^3:S\subsetneq T\subsetneq U,\ |S|+|T|=|U|}.$
For a two-coloring $\chi:B_n\to{0,1}$, a triple $(S,T,U)\in E_n$ is monochromatic if
$\chi(S)=\chi(T)=\chi(U)$. Let $R^{\mathrm{arith}}_2$ be the least integer $n$ such that every two-coloring of $B_n$ contains a monochromatic triple in $E_n$, and let $M^{\mathrm{arith}}_2(B_n)$ be the minimum number of monochromatic triples in $E_n$ over all two-colorings of $B_n$. We prove that
$R^{\mathrm{arith}}_2=9.$
Moreover,
$|E_n|=\binom{2n}{n}-[x^n](1+x+x^2)^n-2^n+1
=\frac{4^n}{\sqrt{\pi n}}\bigl(1+o(1)\bigr),$
and
$
2^{\delta n+o(n)}
\le M^{\mathrm{arith}}_2(B_n)
\le 2^{\gamma n+o(n)},
$
where $\delta\approx 1.356779$ and $\gamma\approx 1.567837$ are explicit entropy constants.
For general nested host families, we prove a double-counting lower bound for strong poset multiplicity. For an arbitrary finite host poset $R$, we also introduce a Fourier--M\"obius method and give an exact Fourier expansion for strong multiplicity, a Parseval-type error bound, and a spectral lower bound.\\[0.2cm]
{\bf Keywords:} Ramsey theory; Poset Ramsey theory; Boolean lattice; Poset multiplicity; Schur-type chain; Probabilistic method; Lov'asz Local Lemma.

\noindent {\bf 2020 Mathematics Subject Classification:} 05D10; 06A07; 05A16.
\end{abstract}

\section{Introduction}

Ramsey theory studies the unavoidable appearance of ordered substructures under arbitrary colorings. Since the foundational work of Ramsey, this point of view has become central in algebra, geometry, logic, ergodic theory, poset theory, number theory, set theory, finite fields, and related areas; see \cite{GRS90}. For applications to communications, information theory, game theory, and coding theory, we refer the reader to the survey of Rosta \cite{Rosta04}.

A \emph{partially ordered set}, or \emph{poset}, is a pair $(X,\leq)$ where $X$ is a set and $\leq$ is a reflexive, anti-symmetric, and transitive relation. A pair $x,y\in X$ is \emph{comparable} if $x\leq y$ or $y\leq x$, and a $k$-chain is a set of $k$ distinct pairwise comparable elements. If $P$ and $Q$ are posets, an injection $f:P\rightarrow Q$ is a \emph{weak embedding} if $f(x)\leq f(y)$ whenever $x\leq y$ in $P$; the image is called a weak copy of $P$ in $Q$. We say that $Q$ is $P$-free if there is no weak embedding of $P$ into $Q$. An injection $f:P\rightarrow Q$ is a \emph{strong embedding} if $f(x)\leq f(y)$ if and only if $x\leq y$ in $P$; in this case the image is an \emph{induced copy} of $P$ in $Q$. In the weak setting below, copies are counted as embeddings unless explicitly stated otherwise.

The \emph{height} $h(P)$ of a poset $P$ is the number of elements in a longest strictly increasing sequence $x_1<x_2<\cdots<x_h$. The \emph{width} $w(P)$ of $P$ is the number of elements in a largest set of pairwise incomparable elements. If $X$ is an $n$-element set, then the \emph{Boolean lattice of dimension $n$}, denoted by $B_n$, is the power set $2^X$ equipped with the inclusion order $\subseteq$.

For a poset $Q$ and an integer $t\geq1$, let $\mathcal C_t(Q)$ denote the set of all strict $t$-chains in $Q$. An $r$-coloring of $\mathcal C_t(Q)$ is a map $\chi:\mathcal C_t(Q)\rightarrow[r]=\{1,2,\ldots,r\}$. Such a coloring is \emph{exact}, or surjective, if every color in $[r]$ is used. For a copy $P'\subseteq Q$ and a coloring $\chi$ of $\mathcal C_t(Q)$, we say that $P'$ is \emph{monochromatic of color $i$} if every $t$-chain of $P'$ has color $i$.

Ramsey theory on posets was initiated by Ne\v{s}et\v{r}il and R\"odl \cite{NR84}, with an emphasis on induced copies. Further developments appear in \cite{DKT91,LT22,McColm91,TrotterRamsey}. In recent years, much attention has been devoted to Boolean Ramsey problems, where the host poset is a Boolean lattice and one seeks monochromatic induced copies of given posets \cite{AW17,CS18,GRS99,JLM15}.

\subsection{Poset Ramsey numbers}

Let $\mathcal Q=\{Q_a:a\geq1\}$ be a family of posets such that $Q_a$ is a subposet of $Q_{a+1}$ and $|Q_a|<|Q_{a+1}|$ for every $a$. The study of chain colorings in posets goes back at least to the generalized Lubell inequality of Griggs, Stahl, and Trotter \cite{GST84}. Cox and Stolee \cite{CS18} introduced a poset Ramsey framework using pographs and weak embeddings, with Boolean lattices as host posets. Katona et al. \cite{KMOW25} gave a formulation for general poset families that treats both weak and strong embeddings.

\begin{definition}
For given posets $P_1,P_2,\ldots,P_r$, the \emph{weak poset Ramsey number}
\[
\operatorname{R}_{r,t}(\mathcal Q\,|\,P_1,P_2,\ldots,P_r)
\]
is the smallest number $n$ such that every $r$-coloring of $\mathcal C_t(Q_n)$ contains, for some $1\leq i\leq r$, a weak embedding of $P_i$ that is monochromatic of color $i$.
\end{definition}

We write $\operatorname{R}_{r,t}(\mathcal Q\,|\,P)$ for $\operatorname{R}_{r,t}(\mathcal Q\,|\,P_1,P_2,\ldots,P_r)$ when $P_1=\cdots=P_r=P$. If $t=1$, we write $\operatorname{R}_{r}(\mathcal Q\,|\,P_1,P_2,\ldots,P_r)$; if $t=1$ and $r=2$, we write $\operatorname{R}(\mathcal Q\,|\,P_1,P_2)$; and if $r=1$ and $t=1$, we write $\operatorname{R}(\mathcal Q\,|\,P)$.

The \emph{strong poset Ramsey number} $\operatorname{R}_{r,t}^{\sharp}(\mathcal Q\,|\,P_1,P_2,\ldots,P_r)$ is defined analogously, with strong embeddings in place of weak embeddings.

\begin{itemize}
\item If $\mathcal Q=\mathcal C=\{C_n:n\geq1\}$ is a nested family of chains, then $\operatorname{R}_{r,t}(\mathcal C\,|\,P_1,P_2,\ldots,P_r)$ is the \emph{weak chain Ramsey number}.
\item If $\mathcal Q=\mathcal B=\{B_n:n\geq1\}$ is the family of Boolean lattices, then $\operatorname{R}_{r,t}(\mathcal B\,|\,P_1,P_2,\ldots,P_r)$ is the \emph{weak Boolean Ramsey number}. In this case we also write $\operatorname{R}_{r,t}(P_1,P_2,\ldots,P_r)$ for short.
\end{itemize}

The corresponding strong chain Ramsey number and strong Boolean Ramsey number are defined in the same way. If $t=1$ and $r=2$, then we write $\operatorname{R}^{\sharp}(\mathcal B\,|\,P_1,P_2)$ for the strong Boolean Ramsey number.

The Boolean Ramsey number $\operatorname{R}(\mathcal B\,|\,B_s,B_t)$ has been studied by Axenovich and Walzer \cite{AW17}, Lu and Thompson \cite{LT22}, Cox and Stolee \cite{CS18}, Gr\'osz, Methuku, and Tompkins \cite{GMT17,GMT23}, Bohman and Peng \cite{BP2021}, and Walzer \cite{Walzer15}. Axenovich and Winter \cite{AW23} studied $\operatorname{R}(P,B_t)$ for a fixed poset $P$ and a growing Boolean lattice $B_t$. Further off-diagonal results were obtained by Winter for complete multipartite posets \cite{Winter23} and chain compositions \cite{WinterIII}, and by Axenovich and Winter for an $N$-shaped poset \cite{WinterII}. Rainbow generalizations of Boolean Ramsey numbers were investigated in \cite{CGLMNPV22,CCLL20,KMOWY26,Patkos20,Patkos26}.

\subsection{Multiplicity problems}

Goodman \cite{Goodman59} initiated the graph Ramsey multiplicity problem in 1959. For a graph $G$ and an integer $n$, the multiplicity $M(G;n)$ is the minimum number of monochromatic copies of $G$ in a $2$-edge-coloring of $K_n$. For graphs $G_1,\dots,G_k$, Ramsey multiplicity asks for the minimum total number of monochromatic copies of $G_1,\dots,G_k$ in a $k$-edge-coloring at the corresponding Ramsey threshold. This problem has been widely studied; see the survey \cite{BR80} and the papers \cite{CFSW22,CFSW23,Fox08,JST96}.

Arithmetic Ramsey multiplicity problems form another major source of motivation. Graham, R\"odl, and Ruci\'nski \cite{GRR96} investigated the minimum number of monochromatic solutions to $x+y=z$ in $2$-colorings of $[1,n]$. Let $\operatorname{M}(a,n)$ be the corresponding minimum for $x+y=az$. For $a=1$, one has $\operatorname{M}(1,n)=\frac{n^2}{22}(1+o(1))$ \cite{RZ98,S99,D03}. Parrilo et al. \cite{PRS08} obtained asymptotic bounds for $\operatorname{M}(2,n)$. Ramsey multiplicity on integers and graphs has connections to games \cite{Harary82}, abelian groups \cite{SW16}, and vector spaces \cite{RS23}, and it has been studied using methods such as graph limits and flag algebras \cite{MN23,PPSS22}.

We now define the multiplicity parameters used in this paper. Throughout these definitions, we assume that each poset under consideration contains at least one $t$-chain, so that monochromaticity is not vacuous.

\begin{definition}
Let $\mathcal Q=\{Q_a:a\geq1\}$ be a family of finite posets with $Q_a\subseteq Q_{a+1}$ and $|Q_a|<|Q_{a+1}|$ for all $a\geq1$. Let $P_1,\dots,P_r$ be finite posets, and let $n_0=\operatorname R_{r,t}(\mathcal Q\mid P_1,\dots,P_r)$ be the weak poset Ramsey number. For every integer $n\geq n_0$ and $Q_n\in\mathcal Q$, the \emph{weak poset multiplicity} is defined as
\[
\operatorname M_{r,t}(Q_n\mid P_1,\dots,P_r)=\min_{\chi:\mathcal C_t(Q_n)\to[r]}\sum_{i=1}^r N_i(\chi),
\]
where $N_i(\chi)$ denotes the number of weak embeddings $f:P_i\hookrightarrow Q_n$ such that $f(P_i)$ is monochromatic of color $i$ under $\chi$.
\end{definition}

\begin{definition}
Let $\mathcal Q=\{Q_a:a\geq1\}$ be a family of finite posets with $Q_a\subseteq Q_{a+1}$ and $|Q_a|<|Q_{a+1}|$ for all $a\geq1$. Let $P_1,\dots,P_r$ be finite posets, and let $n_0=\operatorname R_{r,t}(\mathcal Q\mid P_1,\dots,P_r)$ be the weak poset Ramsey number. The \emph{weak poset Ramsey multiplicity} is defined as
\[
\operatorname{RM}_{r,t}(Q_{n_0}\mid P_1,\dots,P_r)=\min_{\substack{\chi:\mathcal C_t(Q_{n_0})\to[r]\\ \chi\text{ surjective}}}\sum_{i=1}^r N_i(\chi),
\]
where $N_i(\chi)$ is defined as in the weak poset multiplicity.
\end{definition}

If $P_1=\cdots=P_r=P$, we write $\operatorname M_{r,t}(Q_n\mid P)=\operatorname M_{r,t}(Q_n\mid P,\dots,P)$ and $\operatorname{RM}_{r,t}(Q_{n_0}\mid P)=\operatorname{RM}_{r,t}(Q_{n_0}\mid P,\dots,P)$. For strong poset Ramsey numbers $\operatorname R_{r,t}^{\sharp}(\mathcal Q\mid P_1,\dots,P_r)$, the corresponding strong poset multiplicity $\operatorname M_{r,t}^{\sharp}(Q_n\mid P_1,\dots,P_r)$ and strong poset Ramsey multiplicity $\operatorname{RM}_{r,t}^{\sharp}(Q_{n_0}\mid P_1,\dots,P_r)$ are defined analogously by replacing weak embeddings with induced subposets.

For positive integers $p$ and $t$, let $h_p(t)$ denote the number of distinct strict $t$-chains in $B_p$.

\begin{proposition}{\upshape \cite{KMOW25}}\label{th-La2-corrected}
For positive integers $p\geq1$ and $t\geq1$, the number of strict $t$-chains in $B_p$ is
\begin{equation}\label{eq-bl-hp-def-corrected}
h_p(t)=\sum_{j=0}^{t-1}(-1)^j\binom{t-1}{j}(t+1-j)^p.
\end{equation}
For fixed $t$, one has $h_p(t)=\Theta((t+1)^p)$.
\end{proposition}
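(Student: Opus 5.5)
The plan is to encode each strict $t$-chain by a function on the ground set, count those functions by inclusion--exclusion, and then read off the growth rate. Let $X$ be the $p$-element ground set of $B_p$. A strict $t$-chain is a set of $t$ pairwise comparable, distinct subsets. It therefore has a unique increasing ordering $S_1\subsetneq S_2\subsetneq\cdots\subsetneq S_t$, so it suffices to count such strictly increasing sequences.

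To each sequence I associate the map $\phi:X\to\{0,1,\ldots,t\}$ defined by
\[
\phi(x)=\begin{cases}0, & x\in S_1,\\ i, & x\in S_{i+1}\setminus S_i\ (1\le i\le t-1),\\ t, & x\notin S_t.\end{cases}
\]
The sets $S_i$ are recovered from $\phi$ via $S_i=\phi^{-1}(\{0,\ldots,i-1\})$. Hence the map from nested sequences $S_1\subseteq\cdots\subseteq S_t$ to arbitrary functions $X\to\{0,\ldots,t\}$ is a bijection. Under this bijection, strictness of every inclusion $S_i\subsetneq S_{i+1}$ is exactly the condition that each of the $t-1$ fibres $\phi^{-1}(1),\ldots,\phi^{-1}(t-1)$ is nonempty. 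The fibres $\phi^{-1}(0)$ and $\phi^{-1}(t)$ are unconstrained, since $S_1$ may be empty and $S_t$ may be all of $X$.

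Next I count functions $X\to\{0,\ldots,t\}$ whose $t-1$ prescribed fibres are all nonempty, using inclusion--exclusion over the set $J\subseteq\{1,\ldots,t-1\}$ of prescribed values that are missed. The number of functions avoiding all values in $J$ is $(t+1-|J|)^p$, so
\[
h_p(t)=\sum_{J\subseteq[t-1]}(-1)^{|J|}(t+1-|J|)^p=\sum_{j=0}^{t-1}(-1)^j\binom{t-1}{j}(t+1-j)^p,
\]
which is \eqref{eq-bl-hp-def-corrected}. This is the only real step, and I expect no genuine obstacle. The one point to be careful about is that the two end fibres are not required to be nonempty; this is why the binomial coefficient is $\binom{t-1}{j}$ rather than $\binom{t+1}{j}$.

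For the asymptotics, fix $t$. The $j=0$ term is $(t+1)^p$. The remaining $t-1$ terms are bounded in absolute value by $\binom{t-1}{j}t^p\le 2^{t-1}t^p$ in total. Since $2^{t-1}t^p=o((t+1)^p)$ as $p\to\infty$, we obtain $h_p(t)=(t+1)^p(1+o(1))$, and in particular $h_p(t)=\Theta((t+1)^p)$. For small $p$, for example $p<t-1$ where $h_p(t)=0$, the statement is only meant asymptotically. For the upper bound one may also simply use $h_p(t)\le(t+1)^p$ directly from the injection into functions.
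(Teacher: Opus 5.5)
Your proof is correct and complete. The map $S_i=\phi^{-1}(\{0,\dots,i-1\})$ is a bijection between weakly nested $t$-sequences in $B_p$ and functions $X\to\{0,\dots,t\}$. Strictness of the chain is exactly nonemptiness of the $t-1$ interior fibres, and inclusion--exclusion over the missed interior values then gives the formula for every $p\ge1$, including $p<t-1$, where both sides vanish.

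The paper does not prove this proposition; it is quoted from \cite{KMOW25}, so there is no in-paper argument to compare against. Your tail estimate actually gives the sharper $h_p(t)=(1+o(1))(t+1)^p$. The case $t=3$ recovers the count $4^n-2\cdot3^n+2^n$ used in Section~\ref{sec_2}.
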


For two integers $m$ and $N$ with $m\leq N$, let $a(m)$ be the number of distinct antichains in $B_m$, and let $g(m,N)$ be the number of strong embeddings of $B_m$ into $B_N$.

\begin{theorem}{\upshape\cite{AW17}}\label{number_b_n}
Let $m,N$ be integers with $m\leq N$, and let $g(m,N)$ be the number of strong embeddings of $B_m$ into $B_N$. Then
\[
\frac{N!}{(N-m)!}(a(m)-m)^{N-m}
\leq g(m,N)\leq
\frac{N!}{(N-m)!}a(m)^{N-m},
\]
where $a(m)$ is the number of antichains in $B_m$. It is known \cite{KM} that
\[
a(m)=2^{\binom{m}{\lfloor m/2\rfloor}(1+O(\log m/m))},
\]
and hence
\[
g(m,N)\leq 2^{m\log_2 N+(N-m)\binom{m}{\lfloor m/2\rfloor}(1+O(\log m/m))}.
\]
\end{theorem}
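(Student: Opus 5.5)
The plan is to encode a strong embedding of $B_m$ into $B_N$ by the images of the atoms together with, for each element outside the image of the maximum, a description of where that element sits relative to the copy. Fix the ground set $[N]$ of $B_N$ and a strong embedding $f:B_m\to B_N$. First I would normalize: replacing $f$ by $X\mapsto f(X)\setminus f(\emptyset)$ reduces to the case $f(\emptyset)=\emptyset$. In that case the atom images $f(\{1\}),\dots,f(\{m\})$ are nonempty and pairwise incomparable. Strongness forces each $f(\{i\})$ to contain an element lying in no $f(\{j\})$ with $j\neq i$. This is where the factor $\frac{N!}{(N-m)!}$ comes from: choose an ordered $m$-tuple of distinct "private" coordinates $x_1,\dots,x_m$, with $x_i\in f(\{i\})$.

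Next I would assign to each of the remaining $N-m$ coordinates $y$ its \emph{trace}, namely the family $\mathcal A_y=\{X\in B_m: y\in f(X)\}$. Since $f$ is order preserving, $\mathcal A_y$ is an up-set of $B_m$. Its minimal elements therefore form an antichain, and up-sets are in bijection with antichains, which gives at most $a(m)$ choices per coordinate.

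For the upper bound I would show that $f$ is determined by the private coordinates together with the traces: $f(X)=\{y: X\in\mathcal A_y\}$. This is the one point needing care, because it requires the private coordinates to be canonically recoverable so that the encoding is injective. The cleanest route is to count the pairs (embedding, choice of private tuple) and note that each embedding admits at least one such tuple. The same traces also recover $f(\emptyset)$, which settles the normalization. Hence $g(m,N)\le\frac{N!}{(N-m)!}a(m)^{N-m}$.

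For the lower bound I would reverse the construction. Choose $x_1,\dots,x_m$, and for every other coordinate choose an up-set $\mathcal A_y$ other than the principal up-sets generated by singletons $\{i\}$; this excludes $m$ antichains. Then define $f(X)=\{x_i:i\in X\}\cup\{y:X\in\mathcal A_y\}$. This map is order preserving because the $\mathcal A_y$ are up-sets, and it reflects order because of the private coordinates: if $f(X)\subseteq f(Y)$, then $x_i\in f(Y)$ for every $i\in X$, so $X\subseteq Y$.

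The main obstacle is injectivity of this encoding, that is, showing that distinct data give distinct embeddings. Excluding the $m$ singleton-generated up-sets is exactly what guarantees that the only coordinates whose trace is the principal up-set of $\{i\}$ are the $x_i$, so the private tuple is recovered as the unique coordinate with that trace. This yields $\frac{N!}{(N-m)!}(a(m)-m)^{N-m}$ distinct embeddings.

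Finally, the displayed estimate for $g(m,N)$ follows by substituting $a(m)=2^{\binom{m}{\lfloor m/2\rfloor}(1+O(\log m/m))}$ from \cite{KM} and using $\frac{N!}{(N-m)!}\le N^m=2^{m\log_2N}$.
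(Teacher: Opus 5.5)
The paper gives no proof of this statement (it is quoted from \cite{AW17}), so your argument has to stand on its own. Your encoding is the right one: an ordered tuple of private coordinates, plus an up-set trace $\mathcal A_y$ for every other coordinate. Your lower bound is correct as written, and so is the final substitution of the estimate from \cite{KM}.

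The gap is in the upper bound, where you only require $x_i\in f(\{i\})$ and $x_i\notin f(\{j\})$ for $j\neq i$. The count $\frac{N!}{(N-m)!}a(m)^{N-m}$ pays nothing for the traces of the $x_i$, so it needs them to be forced, namely $\mathcal A_{x_i}=U_i:=\{X\in B_m: i\in X\}$. Your condition does not force this. Take $m=3$, $N\geq 4$, and write $f(X)=\{z\in[N]: X\in\mathcal A_z\}$. Let $f$ be the map in which coordinates $1,2,3$ have traces $U_1,U_2,U_3$, coordinate $4$ has as trace the up-set generated by the antichain $\{\{1\},\{2,3\}\}$, and all other coordinates have empty trace. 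Let $f'$ differ from $f$ only in that coordinate $4$ has trace $U_1$. Both are strong embeddings, since coordinates $1,2,3$ have traces $U_1,U_2,U_3$. In both, coordinate $4$ is private for $1$ in your sense. The pairs $(f,(4,2,3))$ and $(f',(4,2,3))$ produce identical data, yet $4\in f(\{2,3\})$ and $4\notin f'(\{2,3\})$. So the map from pairs to data is not injective, and the claim that $f$ is determined by the private tuple and the remaining $N-m$ traces fails as stated.

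The repair is one line, applying strongness to a different pair. Since $\{i\}\not\subseteq[m]\setminus\{i\}$, there is some $x_i\in f(\{i\})\setminus f([m]\setminus\{i\})$. By monotonicity, $x_i\in f(X)$ whenever $i\in X$. Also $x_i\notin f(X)$ whenever $i\notin X$, because then $f(X)\subseteq f([m]\setminus\{i\})$. Thus $\mathcal A_{x_i}=U_i$, the $x_i$ are automatically distinct, and $f(X)=\{x_i:i\in X\}\cup\{y\notin\{x_1,\dots,x_m\}:X\in\mathcal A_y\}$ is recovered from the tuple and the other $N-m$ traces. This gives the upper bound. It is exactly the notion of private coordinate your lower-bound recovery step already uses (the unique coordinate whose trace is $U_i$), so with it the two halves of your argument match. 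You can also drop the normalization $f(\emptyset)=\emptyset$: coordinates in $f(\emptyset)$ and coordinates outside $f([m])$ simply have traces $B_m$ and $\emptyset$, respectively.
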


We now specialize the above multiplicity notions to the Boolean lattice family $\mathcal B=\{B_n:n\geq1\}$. The resulting weak Boolean multiplicity and weak Boolean Ramsey multiplicity are denoted by $\operatorname M_{r,t}(B_n\mid P_1,\dots,P_r)$ and $\operatorname{RM}_{r,t}(B_{n_0}\mid P_1,\dots,P_r)$. The corresponding strong Boolean multiplicity $\operatorname M_{r,t}^{\sharp}(B_n\mid P_1,\dots,P_r)$ and strong Boolean Ramsey multiplicity $\operatorname{RM}_{r,t}^{\sharp}(B_{n_0}\mid P_1,\dots,P_r)$ are defined analogously for induced copies.

\subsection{Our results}

This paper has two main parts. First, in the Boolean lattice, we study an arithmetic Schur-type subfamily of induced $3$-chains. Second, for general nested poset families, we prove lower and upper bounds for strong poset multiplicity, including a monotonicity result and a zero-multiplicity criterion.

\medskip\noindent\textbf{(1) Boolean lattice multiplicity for arithmetic Schur-type chains.}
In Section~\ref{sec_2}, we consider
\[
 E_n=\{(S,T,U)\in B_n^3:S\subsetneq T\subsetneq U,\ |S|+|T|=|U|\}.
\]
The rank equation $|S|+|T|=|U|$ makes this a Boolean-lattice analogue of Schur-type arithmetic configurations, rather than the full induced-copy problem for $C_3$. We obtain an exact enumeration of $E_n$, determine the two-color threshold for forcing a monochromatic member, and prove exponential lower and upper bounds for the minimum number of monochromatic arithmetic chains.

\begin{theorem}\label{thm:main}
Let
\[
 E_n=\{(S,T,U)\in B_n^3:S\subsetneq T\subsetneq U,\ |S|+|T|=|U|\}.
\]
Then the two-color threshold for forcing a monochromatic member of $E_n$ equals $9$. Moreover,
\[
|E_n|
=\binom{2n}{n}-[x^n](1+x+x^2)^n-2^n+1
=\frac{4^n}{\sqrt{\pi n}}\bigl(1+o(1)\bigr).
\]
If $M^{\mathrm{arith}}_2(B_n)$ denotes the minimum number of monochromatic members of $E_n$ over all two-colorings of $B_n$, then
\[
2^{\delta n+o(n)}
\leq M^{\mathrm{arith}}_2(B_n)
\leq 2^{\gamma n+o(n)},
\]
where
\[
\delta=-3\cdot\frac1{10}\log_2\frac1{10}-\frac7{10}\log_2\frac7{10}
      \approx 1.356779
\]
and
\[
\gamma=\min_{1/3\leq\alpha\leq1/2}
\max\{H(\alpha)+\alpha\log_2 3,\ 2H(1-2\alpha)\}
      \approx 1.567837.
\]
Here $H(x)=-x\log_2 x-(1-x)\log_2(1-x)$, with $H(0)=0$ by continuity, and the minimum is attained at the unique solution $\alpha_0\approx0.383292$ of
\[
H(\alpha)+\alpha\log_2 3=2H(1-2\alpha).
\]
\end{theorem}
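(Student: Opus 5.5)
The plan is to handle the four parts separately: the exact count, the threshold $9$, and the two bounds on $M^{\mathrm{arith}}_2(B_n)$. The common tool is a labelling of the ground set. A triple $(S,T,U)$ with $S\subseteq T\subseteq U$ is the same as a map $[n]\to\{0,1,2,3\}$: elements of $S$ get label $3$, elements of $T\setminus S$ get $2$, elements of $U\setminus T$ get $1$, and the rest get $0$. Write $a,b,c$ for the number of labels $3,2,1$. Then $|S|+|T|=|U|$ reads $2a+b=a+b+c$, that is, $a=c$.

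\emph{Exact count.} I would first count the labellings with $a=c$ and no strictness imposed. This number is $[x^0](x+x^{-1}+2)^n=[x^0]\bigl((1+x)^2/x\bigr)^n=\binom{2n}{n}$. Strictness $S\neq T$ means $b\ge1$, and $T\neq U$ means $c\ge1$, equivalently $a\ge1$. I remove the bad cases by inclusion--exclusion:
\begin{itemize}
\item $a=c=0$ leaves only labels $0,2$, giving $2^n$ labellings;
\item $b=0$ leaves labels $0,1,3$ with $a=c$, giving $[x^0](x+x^{-1}+1)^n=[x^n](1+x+x^2)^n$;
\item $a=b=c=0$ is the single all-zero labelling, added back once.
\end{itemize}
This yields the stated formula. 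The central trinomial coefficient is $\Theta(3^n/\sqrt n)$, so it and $2^n$ are negligible. Stirling's formula for $\binom{2n}{n}$ then gives $4^n/\sqrt{\pi n}\,(1+o(1))$.

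\emph{Threshold.} Along a maximal chain $\emptyset=A_0\subset A_1\subset\cdots\subset A_n$, the members of $E_n$ are exactly the triples of ranks $(s,t,s+t)$ with $1\le s<t$ and $s+t\le n$. So the question reduces to the weak Schur problem: $x+y=z$ with $x\neq y$. I would use (or verify by a short case analysis) that the weak Schur number is $WS(2)=8$. That is, $[1,8]$ has a $2$-coloring with no solution of $x+y=z$, $x<y$, namely $\{1,2,4,8\}$ and $\{3,5,6,7\}$, while $[1,9]$ has none.
\begin{itemize}
\item For $n=8$, coloring each set by its rank according to the $[1,8]$ coloring (rank $0$ arbitrary) avoids monochromatic members of $E_8$, since every triple has ranks $(s,t,s+t)$ with $1\le s<t$.
\item For $n=9$, any coloring restricted to one maximal chain induces a coloring of $[1,9]$, which contains a monochromatic weak Schur triple. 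That triple is a member of $E_9$.
\end{itemize}
Hence the threshold is $9$.

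\emph{Lower bound.} I would use a chain-averaging blow-up. Let $m=\lfloor n/10\rfloor$ and consider all rank-chains $A_m\subset A_{2m}\subset\cdots\subset A_{9m}$ with $|A_{km}|=km$. By the $n=9$ argument applied to indices $1,\dots,9$, each such chain contains a monochromatic triple with ranks $(im,jm,(i+j)m)$ for some $i<j$, $i+j\le9$. A fixed triple of these ranks lies in $(\text{total number of chains})/\binom{n}{im,\,(j-i)m,\,im,\,n-(i+j)m}$ of the chains. Double counting therefore gives
\[
M^{\mathrm{arith}}_2(B_n)\ge\min_{i<j,\ i+j\le 9}\binom{n}{im,\,(j-i)m,\,im,\,n-(i+j)m}.
\]
By entropy concavity the minimum is at $(i,j)=(1,2)$, with part proportions $(1/10,1/10,1/10,7/10)$, which gives $2^{\delta n+o(n)}$.

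\emph{Upper bound and main obstacle.} I expect the upper bound to be the hardest part, since it needs an explicit coloring and an entropy optimisation. I would use a rank-based coloring by intervals with threshold $\alpha n$, for example red on ranks $[1,\alpha n]\cup(2\alpha n,n]$ and blue on $(\alpha n,2\alpha n]$.
\begin{itemize}
\item Blue triples cannot occur, because $s,t>\alpha n$ forces $s+t>2\alpha n$.
\item The monochromatic triples are then exactly the red ones: either both small ranks (with $s+t\le\alpha n$), or both $t$ and $s+t$ above $2\alpha n$.
\end{itemize}
The number of triples of given type $(a,b,a,n-2a-b)$ is a multinomial. Maximising its entropy over each red region should produce the two competing exponents $H(\alpha)+\alpha\log_2 3$ and $2H(1-2\alpha)$, and balancing them at $\alpha_0\approx0.3833$ gives $\gamma$. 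If this particular interval pattern does not reproduce exactly these two exponents, I would adjust the breakpoints, keeping the parametrisation by $\alpha\in[1/3,1/2]$, until the two dominant regions match the stated terms. Checking the boundary maximisers of each region is the delicate computation.
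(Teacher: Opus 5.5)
Your proposal is correct. For the threshold and both multiplicity bounds it is essentially the paper's argument. It uses the same reduction along chains to $WS(2)=8$, the same weighted double counting over chains giving $M^{\mathrm{arith}}_2(B_n)\ge\min_{i<j,\,i+j\le9}N_{im,jm}$, and the same coloring that is blue exactly on ranks in $(\alpha n,2\alpha n]$. Your threshold argument is more explicit than the paper's: the paper invokes $WS(2)=8$ only inside the lower-bound proof and never exhibits the $B_8$ coloring, which your rank coloring from $\{1,2,4,8\},\{3,5,6,7\}$ supplies. Your concavity remark is valid once made precise. The proportions depend affinely on $(i,j)$, so only the hull vertices $(1,2),(1,8),(4,5)$ need checking, and $(1,2)$ and $(1,8)$ tie at $\delta$.

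The genuinely different part is the exact count. The paper sums over ranks to get $\sum_{k\ge1}\binom nk\binom{n-k}k(2^{n-2k}-1)$. It then evaluates $\sum_k\binom nk\binom{n-k}k2^{n-2k}=\binom{2n}n$ by a pairing argument, and leaves implicit that $\sum_k\binom nk\binom{n-k}k=[x^n](1+x+x^2)^n$. Your four-label encoding with $a=c$ instead reduces everything to one constant-term extraction plus inclusion--exclusion over the two strictness conditions, and it produces the trinomial term directly. That route is shorter and explains where each of the four terms comes from.

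The hedging in your upper bound is unnecessary, because your interval coloring gives exactly the two stated exponents.
\begin{itemize}
\item For $s+t\le\alpha n$: the chains with $|U|=u$ number at most $\binom nu3^u$, which increases in $u$ while $u<3n/4$. So this region contributes $2^{(H(\alpha)+\alpha\log_2 3)n+o(n)}$.
\item For $t>2\alpha n$: the hypothesis $\alpha\ge1/3$ gives $s\le n-t<(1-2\alpha)n\le\alpha n$, so $S$ is automatically red. With $q=n-t$ fixed, the count is $\binom nq\sum_s\binom qs\binom{n-q}s=\binom nq^2$ by Vandermonde. Since $1-2\alpha\le1/3<1/2$, the sum over $q<(1-2\alpha)n$ is $2^{2H(1-2\alpha)n+o(n)}$.
\end{itemize}
To justify the claim about $\alpha_0$, you should still add, as the paper does, that the first exponent increases and the second decreases on $[1/3,1/2]$. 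Their order is reversed between the two endpoints, so the maximum is minimized at their unique crossing.
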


\medskip\noindent\textbf{(2) Bounds for strong poset multiplicity.}
In Section~\ref{sec_3}, we first prove a double-counting lower bound for strong poset multiplicity.

\begin{theorem}\label{th-double-counting}
Let $\mathcal Q=\{Q_a:a\geq1\}$ be a family of finite posets with $Q_a\subseteq Q_{a+1}$ and $|Q_a|<|Q_{a+1}|$ for all $a\geq1$. Let $r,t\geq1$ be integers, let $P_1,\dots,P_r$ be finite posets, and let $n_0=\operatorname R^{\sharp}_{r,t}(\mathcal Q\mid P_1,\dots,P_r)$. For all integers $n\geq n_0$ and $Q_n\in\mathcal Q$,
\begin{equation}\label{eq:main_strong_bound}
\operatorname M^{\sharp}_{r,t}(Q_n\mid P_1,\dots,P_r)
\geq
\operatorname M^{\sharp}_{r,t}(Q_{n_0}\mid P_1,\dots,P_r)
\cdot \frac{\mathcal N^{\sharp}(Q_n,Q_{n_0})}{D^{\sharp}}.
\end{equation}
Here $D^{\sharp}$ denotes the maximum of $1$ and the numbers of strong copies $S\subseteq Q_n$ such that $S\cong Q_{n_0}$ and $P_i'\subseteq S$, as $i$ ranges over $[r]$ and $P_i'\subseteq Q_n$ ranges over strong copies of $P_i$.
\end{theorem}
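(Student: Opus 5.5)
Fix an optimal coloring $\chi$ of $\mathcal C_t(Q_n)$, i.e.\ one achieving $\operatorname M^{\sharp}_{r,t}(Q_n\mid P_1,\dots,P_r)=\sum_i N_i(\chi)$. The plan is a standard averaging (supersaturation) argument over all strong copies $S\subseteq Q_n$ with $S\cong Q_{n_0}$; write $\mathcal S$ for the set of such copies, so $|\mathcal S|=\mathcal N^{\sharp}(Q_n,Q_{n_0})$. If $\mathcal S$ is empty the right-hand side of \eqref{eq:main_strong_bound} is $0$ and there is nothing to prove, so assume otherwise.

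\emph{Step 1 (restriction).} For each $S\in\mathcal S$, fix an isomorphism $\phi_S:Q_{n_0}\to S$. Since $S$ is an induced subposet of $Q_n$, strict $t$-chains of $S$ are exactly strict $t$-chains of $Q_n$ contained in $S$. Hence $\chi$ restricts to a coloring $\chi_S$ of $\mathcal C_t(S)$, which transports via $\phi_S$ to a coloring of $\mathcal C_t(Q_{n_0})$. A strong copy of $P_i$ in $S$ is a strong copy of $P_i$ in $Q_n$, because induced-in-induced is induced. Likewise, monochromaticity of color $i$ depends only on the chains inside the copy, so it is the same whether checked in $S$ or in $Q_n$. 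Therefore the number $m(S)$ of monochromatic (in the prescribed color) strong copies of the $P_i$ lying inside $S$ satisfies $m(S)\ge \operatorname M^{\sharp}_{r,t}(Q_{n_0}\mid P_1,\dots,P_r)$, by definition of the multiplicity at $Q_{n_0}$. This quantity is well defined because $n_0$ is the strong Ramsey number.

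\emph{Step 2 (double counting).} Count the pairs $(P_i',S)$ with $S\in\mathcal S$ and $P_i'$ a monochromatic strong copy of some $P_i$ with $P_i'\subseteq S$. Summing over $S$ gives at least $|\mathcal S|\cdot\operatorname M^{\sharp}_{r,t}(Q_{n_0}\mid\cdots)$. Summing over $P_i'$ gives at most $\sum_i N_i(\chi)\cdot D^{\sharp}$, since each copy $P_i'$ lies in at most $D^{\sharp}$ members of $\mathcal S$ by the definition of $D^{\sharp}$; the max with $1$ only weakens this and avoids division by zero. Comparing the two counts and dividing by $D^{\sharp}$ yields \eqref{eq:main_strong_bound}.

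\emph{Main obstacle.} The one delicate point is consistency of counting conventions: strong copies must be counted in the same way (as subposets, or with a label $i$ when several $P_i$ coincide) in $N_i$, in $m(S)$, and in $D^{\sharp}$. I would count pairs $(i,P_i')$ throughout, so that a subposet isomorphic to two different $P_i$ is counted once per index on both sides. Once this convention is fixed uniformly, the inequality in Step 1 is exactly the definition of the multiplicity at $Q_{n_0}$ and the argument closes.
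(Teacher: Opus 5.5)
Your proposal is correct and is essentially the paper's proof. Both restrict the coloring to each strong copy $S\cong Q_{n_0}$, use that $\operatorname M^{\sharp}_{r,t}(Q_{n_0}\mid P_1,\dots,P_r)$ is a minimum over all (not necessarily surjective) colorings to bound the number of monochromatic copies inside each $S$, and then double count pairs $(S,P_i')$ via $D^{\sharp}$; the only cosmetic difference is that you fix an optimal coloring of $\mathcal C_t(Q_n)$ at the start, whereas the paper takes an arbitrary coloring and minimizes at the end.
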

We also give probabilistic upper bounds for strong poset multiplicity; see Proposition~\ref{pro-UUB}.

Let $\mathcal N^{\sharp}(Q_n,P)$ denote the total number of strong copies of $P$ in $Q_n$. 
We develop a Fourier--M\"obius method for
estimating strong poset multiplicity in an arbitrary finite host poset. Let
$P$ and $Q$ be finite posets, let
$h=\tau_t(P)=|\mathcal C_t(P)|\geq1$,
and assume that $Q$ contains a strong copy of $P$. For
$1\leq k\leq h$, let $\kappa_k(P,Q,t)$ be the Fourier overlap parameter
defined in Section~\ref{sec:fourier-posets}. Then
\begin{equation*}\label{eq:intro-fourier-multiplicity-bound}
\begin{aligned}
\operatorname M_{r,t}^{\sharp}(Q\mid P)
\geq\;&
\mathcal N^{\sharp}(Q,P)
\min_{\boldsymbol\alpha\in\Delta_{r-1}}
\Bigg[
\sum_{i=1}^r\alpha_i^h-
\sum_{i=1}^r\sum_{k=1}^h
\binom hk\kappa_k(P,Q,t)
\alpha_i^{h-k}
\bigl(\alpha_i(1-\alpha_i)\bigr)^{k/2}
\Bigg]_+,
\end{aligned}
\end{equation*}
where
\[
\Delta_{r-1}
=
\left\{
(\alpha_1,\ldots,\alpha_r)\in[0,1]^r:
\sum_{i=1}^r\alpha_i=1
\right\}
\]
and $[x]_+=\max\{x,0\}$.

\section{Boolean lattice multiplicity for arithmetic Schur-type chains}\label{sec_2}

In this section we do not study the general strong Boolean lattice multiplicity of all induced copies of $C_3$. Instead, we study the arithmetic subfamily
 $E_n=\{(S,T,U)\in B_n^3:\ S\subsetneq T\subsetneq U,\ |S|+|T|=|U|\}$.
The condition $|S|+|T|=|U|$ is an additional arithmetic constraint on the ranks, so this is a separate multiplicity problem and requires separate notation. Throughout this section we identify $B_n$ with $2^{[n]}$.

For a $2$-coloring $\chi:B_n\to\{0,1\}$, define
\[
 M_{\mathrm{arith}}(\chi)=|\{(S,T,U)\in E_n:\ \chi(S)=\chi(T)=\chi(U)\}|,
\]
and set
\[
 M^{\mathrm{arith}}_2(B_n)=\min_{\chi:B_n\to\{0,1\}}M_{\mathrm{arith}}(\chi).
\]

\subsection{Enumeration of arithmetic Schur-type chains}

Recall that the number of all induced copies of $C_3$ in $B_n$ is $h_n(3)=4^n-2\cdot3^n+2^n$, by Proposition~\ref{th-La2-corrected}. We now count the arithmetic subfamily $E_n$.

\begin{proof}[Proof of Theorem \ref{thm:main}]
Fix ranks $|S|=k$, $|T|=m$, and $|U|=k+m$. Since
$S\subsetneq T\subsetneq U$,
we must have $1\leq k<m$, and the condition $|U|=k+m$ gives $k+m\leq n$.

For fixed $k$ and $m$, we count the triples $(S,T,U)$ as follows. First choose the middle set $T$ in $\binom{n}{m}$ ways. Then choose $S\subset T$ with $|S|=k$ in $\binom{m}{k}$ ways. Finally, since $U$ must contain $T$ and have size $k+m$, the set $U\setminus T$ must be a $k$-subset of $[n]\setminus T$, and this can be chosen in $\binom{n-m}{k}$ ways. Hence
\[
|E_n|
=
\sum_{\substack{1\leq k<m\\ k+m\leq n}}
\binom{n}{m}\binom{m}{k}\binom{n-m}{k}.
\]
We now transform this sum. Using
\[
\binom{n}{m}\binom{m}{k}
=
\binom{n}{k}\binom{n-k}{m-k},
\]
we obtain
\begin{align*}
\sum_{m=k+1}^{n-k}\binom{n}{m}\binom{m}{k}\binom{n-m}{k}
&=
\binom{n}{k}
\sum_{m=k+1}^{n-k}
\binom{n-k}{m-k}\binom{n-m}{k}.
\end{align*}
Let $\ell=m-k$. Then $1\leq \ell\leq n-2k$, and therefore
\begin{align*}
\sum_{m=k+1}^{n-k}\binom{n}{m}\binom{m}{k}\binom{n-m}{k}
&=
\binom{n}{k}
\sum_{\ell=1}^{n-2k}
\binom{n-k}{\ell}\binom{n-k-\ell}{k}  \\
&=
\binom{n}{k}\binom{n-k}{k}
\sum_{\ell=1}^{n-2k}
\binom{n-2k}{\ell}  \\
&=
\binom{n}{k}\binom{n-k}{k}
\bigl(2^{n-2k}-1\bigr).
\end{align*}
Summing over $1\leq k\leq \lfloor n/2\rfloor$ gives
\[
|E_n|
=
\sum_{k=1}^{\lfloor n/2\rfloor}
\binom{n}{k}\binom{n-k}{k}
\bigl(2^{n-2k}-1\bigr).
\]
It remains to derive the equivalent form. Let
\[
A_n=
\sum_{k=0}^{\lfloor n/2\rfloor}
\binom{n}{k}\binom{n-k}{k}2^{n-2k}
\]
and
\[
T_n=
\sum_{k=0}^{\lfloor n/2\rfloor}
\binom{n}{k}\binom{n-k}{k}.
\]
Then the preceding formula gives
$|E_n|=A_n-T_n-2^n+1$,
because the $k=0$ term of $A_n$ is $2^n$, while the $k=0$ term of $T_n$ is $1$.

We claim that
$A_n=\binom{2n}{n}$.
Indeed, consider a set of \(2n\) distinct elements divided into \(n\) fixed
pairs. We count the number of ways to choose \(n\) elements from this set.
Clearly, the answer is \(\binom{2n}{n}\).
We count the same quantity according to the number of pairs from which both
elements are chosen. Suppose exactly \(k\) pairs contribute both of their
elements. Since the chosen set has size \(n\), exactly \(k\) pairs must
contribute no element, and the remaining \(n-2k\) pairs contribute exactly
one element each. The \(k\) pairs contributing two elements can be chosen in
\(\binom{n}{k}\) ways. Then the \(k\) pairs contributing no element can be
chosen in \(\binom{n-k}{k}\) ways. Finally, from each of the remaining
\(n-2k\) pairs, one of the two elements is chosen, giving \(2^{n-2k}\)
choices. Hence, for fixed \(k\), the number of choices is
\[
\binom{n}{k}\binom{n-k}{k}2^{n-2k}.
\]
Summing over all possible \(k\), we obtain
\[
A_n
=
\sum_{k=0}^{\lfloor n/2\rfloor}
\binom{n}{k}\binom{n-k}{k}2^{n-2k}
=
\binom{2n}{n}.
\]

Therefore
\[
|E_n|
=
\binom{2n}{n}
-
T_n
-
2^n+1
=
\binom{2n}{n}
-
\sum_{k=0}^{\lfloor n/2\rfloor}
\binom{n}{k}\binom{n-k}{k}
-
2^n+1.
\]

Finally, we estimate the error term. The quantity $T_n$ counts ordered pairs $(A,B)$ of disjoint subsets of $[n]$ with $|A|=|B|$. Hence $T_n$ is at most the number of all ordered pairs of disjoint subsets of $[n]$, which is $3^n$, since each element of $[n]$ may lie in $A$, lie in $B$, or lie in neither. Thus
$T_n\leq 3^n$.
Consequently,
\[
|E_n|
=
\binom{2n}{n}+O(3^n).
\]
Using the standard asymptotic formula
\[
\binom{2n}{n}
=
\frac{4^n}{\sqrt{\pi n}}\bigl(1+o(1)\bigr),
\]
and noting that $3^n=o(4^n/\sqrt n)$, we obtain that 
\[
|E_n|
=
\frac{4^n}{\sqrt{\pi n}}\bigl(1+o(1)\bigr).
\]
\end{proof}

\subsection{Threshold and exponential multiplicity bounds}

\begin{proposition}\label{prop:global-bounds}
As $n\to\infty$,
\[
2^{\delta n+o(n)}
\leq M^{\mathrm{arith}}_2(B_n)
\leq 2^{\gamma n+o(n)},
\]
where
\[
\delta=-3\cdot\frac1{10}\log_2\frac1{10}-\frac7{10}\log_2\frac7{10}
      \approx 1.356779
\]
and
\[
\gamma=\min_{1/3\leq\alpha\leq1/2}
\max\{H(\alpha)+\alpha\log_2 3,\ 2H(1-2\alpha)\}
      \approx 1.567837.
\]
Here \(H(x)=-x\log_2x-(1-x)\log_2(1-x)\), with \(H(0)=0\) by continuity, is the binary entropy function.
\end{proposition}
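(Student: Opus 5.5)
The two bounds are proved separately: a supersaturation (double-counting) argument built on the threshold $R^{\mathrm{arith}}_2=9$ for the lower bound, and an explicit rank-based colouring for the upper bound.

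\emph{Lower bound.} I would first establish the threshold part of Theorem~\ref{thm:main}: every $2$-colouring of $B_9$ contains a monochromatic member of $E_9$. I expect to do this by a case analysis on the colours of small sets and their complements, or by a computer check. I would then blow it up by scaling ranks. Fix $m=\lfloor n/10\rfloor$ and an ordered family of $9$ pairwise disjoint $m$-subsets $X_1,\dots,X_9$ of $[n]$. The map $I\mapsto \bigcup_{i\in I}X_i$ embeds $B_9$ into $B_n$. It multiplies every rank by $m$, so it preserves strict inclusions and the relation $|S|+|T|=|U|$. Hence it sends $E_9$ into $E_n$.

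Any colouring $\chi$ of $B_n$ pulls back to a colouring of $B_9$. That colouring has a monochromatic triple, whose image is a monochromatic member of $E_n$. Such an image triple has the shape $S=\bigcup_{I_1}X_i$, $T\setminus S=\bigcup_{I_2}X_i$, $U\setminus T=\bigcup_{I_3}X_i$, with $|I_3|=|I_1|$. The triple therefore determines a partition of its own ground set into blocks, and the number of block families inducing it is at most $9!\cdot n!/\bigl(m!^{j}(n-jm)!\bigr)$, where $j=|I_1|+|I_2|+|I_3|\le 9$. Dividing the total number $n!/(m!^9(n-9m)!)$ of block families by this maximal overlap gives at least
\[
\frac{n!}{m!^3\,(n-3m)!}\,2^{-o(n)}
\]
monochromatic triples, in the worst case $j=3$ (one block each). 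By Stirling this is $2^{\delta n+o(n)}$, since the multinomial with proportions $(1/10,1/10,1/10,7/10)$ has entropy exactly $\delta$.

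The delicate point is the overlap bound for the cases $j>3$. I would handle it by observing that larger $j$ only makes the ratio larger: the factor $n!/(m!^j(n-jm)!)$ in the denominator is offset by the fact that distinct monochromatic triples arising from different block families are counted with bounded multiplicity. If necessary, I would restrict attention to the shape guaranteed by the $B_9$ threshold, taking the minimum over the finitely many shapes.

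\emph{Upper bound.} I would use a colouring that depends only on rank. Take $\alpha\in[1/3,1/2]$. Colour a set $0$ if its rank lies in $[\alpha n,2\alpha n)$, and $1$ otherwise. Colour $0$ contains no arithmetic triple: from $\alpha n\le k<m$ we get $k+m>2\alpha n$, so $|U|$ falls outside the interval. A colour-$1$ triple with ranks $(k,m,k+m)$ has one of two types.
\begin{itemize}
\item[(a)] $k+m<\alpha n$. Then $U$ has size below $\alpha n$ and the pair $(S,T)$ inside $U$ is one of at most $3^{|U|}$ choices. This gives at most $\mathrm{poly}(n)\binom{n}{\alpha n}3^{\alpha n}=2^{(H(\alpha)+\alpha\log_2 3)n+o(n)}$ triples.
\item[(b)] $k+m\ge 2\alpha n$ with $k,m$ not both in the colour-$0$ interval. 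Since $k<m$ and $k+m\le n$, this forces $k<\alpha n$ and $m\ge 2\alpha n$, so $U\setminus T$ has size $k\le(1-2\alpha)n$. Using the formula $\binom nk\binom{n-k}{k}\binom{n-2k}{\ell}$ from the enumeration in the proof of Theorem~\ref{thm:main}, with $k\le(1-2\alpha)n$, the count is at most $2^{2H(1-2\alpha)n+o(n)}$. Here the two $k$-sets $S$ and $U\setminus T$ each contribute a factor $\binom{n}{(1-2\alpha)n}$, and the remaining freedom is absorbed after checking the maximiser. This exponent check is the step where I expect to spend effort.
\end{itemize}
Summing over the polynomially many rank choices and minimising $\max\{\cdot,\cdot\}$ over $\alpha$ yields $2^{\gamma n+o(n)}$, with the optimum at the crossing point $\alpha_0$.
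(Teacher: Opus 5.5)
Your overall architecture (a rank-interval colouring for the upper bound, and supersaturation over blown-up copies of a finite witness for the lower bound) is sound, and you aim at the right exponents. However, the two counting steps that actually produce $\gamma$ and $\delta$ are incorrect as written.

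\textbf{Upper bound, case (b).} Your colouring and case split coincide with the paper's, but the count cannot be done factor by factor. Bounding $\binom nk\binom{n-k}{k}$ by $\binom{n}{(1-2\alpha)n}^2$ already spends the whole budget $2^{2H(1-2\alpha)n}$. The leftover factor $\sum_{\ell\ge 2\alpha n-k}\binom{n-2k}{\ell}$ can itself be as large as $2^{H(1-2\alpha)n-o(n)}$ (take $k$ small), so nothing is ``absorbed''. Indeed, for fixed $q=n-m$ the dominant $k$ is about $q(n-q)/n$, and there $T\setminus S$ still contributes an exponential factor. The binding constraint is $q=|[n]\setminus T|\le(1-2\alpha)n$, not $k\le(1-2\alpha)n$. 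The missing step is to fix $m$ rather than $k$. Choose $T$ in $\binom nm$ ways, then choose $S\subseteq T$ and $U\setminus T\subseteq[n]\setminus T$ of a common size $k$. Vandermonde gives $\sum_k\binom mk\binom{n-m}{k}=\binom nm$, so case (b) contributes at most $\sum_{q\le(1-2\alpha)n}\binom nq^2=2^{2H(1-2\alpha)n+o(n)}$, using $1-2\alpha\le 1/3$. This is exactly the paper's argument.

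\textbf{Lower bound.} Blowing up a $B_9$ witness by nine disjoint $m$-blocks is a legitimate alternative to the paper's averaging over maximal chains, but your overlap count is wrong. Fix a triple of shape $(p_1,p_2,p_1)$ and put $j=2p_1+p_2$. The number of block families inducing it is
\[
\binom{9}{p_1,\,p_2,\,p_1,\,9-j}\,\frac{(p_1m)!^2\,(p_2m)!}{m!^{j}}\cdot\frac{(n-jm)!}{m!^{9-j}\,(n-9m)!},
\]
not $9!\,n!/(m!^j(n-jm)!)$; for $j=3$ the true value is exponentially larger. The resulting ratio is $\binom{n}{p_1m,\,p_2m,\,p_1m,\,n-jm}\big/\binom{9}{p_1,\,p_2,\,p_1,\,9-j}$.

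Your claim that larger $j$ only increases this ratio is false. Its exponent is the entropy of $(p_1,p_2,p_1,10-j)/10$, which is about $1.571$ for $(1,6,1)$ but drops back to $\delta$ for $(1,7,1)$. So you genuinely need your fallback of minimising over all shapes. The minimum is $\delta$, attained at $(1,1,1)$ and $(1,7,1)$, which are the paper's pairs $(i,j)=(1,2)$ and $(1,8)$.

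Finally, the $B_9$ threshold should not be left to an unspecified case analysis or computer check. Restrict the colouring to one maximal chain of $B_9$ and apply the weak Schur number $WS(2)=8$. Once you do this, only the chain $X_1\subset X_1\cup X_2\subset\cdots$ of each block family is ever used, and your argument reduces to the paper's. That is: average over maximal chains of $B_n$, read off ranks $q,2q,\dots,9q$, and give each monochromatic triple of ranks $(a,b,a+b)$ weight $1/c_{a,b}$, where $N_{a,b}c_{a,b}=n!$.
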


\begin{proof}
We first prove the upper bound. Fix a real number
\(\alpha\in[1/3,1/2]\). Color a set \(A\in B_n\) blue if
$\alpha n<|A|\leq 2\alpha n$,
and color it red otherwise. The boundary inequalities are interpreted with the obvious integer rounding; these roundings affect only the \(2^{o(n)}\) factors below. There is no blue member of \(E_n\). Indeed, if
\(|S|=k\), \(|T|=m\), \(|U|=k+m\), and \(S,T,U\) are all blue, then
\(k>\alpha n\) and \(m>\alpha n\), so \(k+m>2\alpha n\), contradicting the
blue condition for \(U\).

It remains to count possible red triples. For a red triple the middle rank \(m=|T|\) cannot satisfy \(\alpha n<m\leq2\alpha n\), since then \(T\) would be blue. Thus either \(m\leq\alpha n\) or \(m>2\alpha n\). If \(m\leq\alpha n\), then red
monochromaticity of \(U\) forces \(k+m\leq\alpha n\). For each
\(u\leq\alpha n\), the number of chains \(S\subsetneq T\subsetneq U\) with
\(|U|=u\) is at most \(3^u\binom nu\). Hence this part contributes at most
\[
\sum_{u\leq \alpha n}\binom nu3^u
\leq
2^{(H(\alpha)+\alpha\log_2 3)n+o(n)}.
\]
If \(m>2\alpha n\), put \(q=n-m\). Then \(q<(1-2\alpha)n\), and
\(k\leq q\). For fixed \(q\), the number of choices is at most
\[
\binom nq\sum_{k=1}^{q}\binom{n-q}{k}\binom qk
\leq \binom nq^2,
\]
where the last inequality follows from Vandermonde's identity.  Summing over
\(q\leq(1-2\alpha)n\) gives at most
\[
\sum_{q\leq(1-2\alpha)n}\binom nq^2
\leq
2^{2H(1-2\alpha)n+o(n)}.
\]
Thus, for every \(\alpha\in[1/3,1/2]\), we have 
\[
M^{\mathrm{arith}}_2(B_n)
\leq
2^{\max\{H(\alpha)+\alpha\log_2 3,\ 2H(1-2\alpha)\}n+o(n)}.
\]
The first function in the maximum is increasing on \([1/3,1/2]\), while the
second is decreasing. At \(\alpha=1/3\) the first is smaller than the second,
and at \(\alpha=1/2\) the first is larger than the second. Therefore the
optimum is attained at their unique intersection. This gives the exponent
\[
\gamma
=
\min_{1/3\leq\alpha\leq1/2}
\max\{H(\alpha)+\alpha\log_2 3,\ 2H(1-2\alpha)\}
\approx
1.567837.
\]

We now prove the lower bound. For all sufficiently large \(n\), put \(q=\lfloor n/10\rfloor\), and let
\(\mathfrak C_n\) be the set of maximal chains of \(B_n\). The choice of the factor \(1/10\) is used only in the final entropy comparison below. For a maximal chain
$C:\ \emptyset=A_0\subsetneq A_1\subsetneq\cdots\subsetneq A_n=[n]$,
look at the nine elements \(A_q,A_{2q},\dots,A_{9q}\). The induced coloring of
these nine elements gives a two-coloring of \(\{1,\dots,9\}\).  Since the weak
Schur number satisfies \(WS(2)=8\), there are integers \(1\leq i<j\) with
\(i+j\leq9\) such that
$\chi(A_{iq})=\chi(A_{jq})=\chi(A_{(i+j)q})$.
Thus every maximal chain contains at least one monochromatic arithmetic triple
whose ranks are of the form \((iq,jq,(i+j)q)\).

Fix ranks \(a<b\) with \(a+b\leq n\). Let
\[
N_{a,b}=\binom{n}{b}\binom ba\binom{n-b}{a}
\]
be the number of arithmetic triples \((S,T,U)\in E_n\) with
\(|S|=a\), \(|T|=b\), and \(|U|=a+b\). A fixed such triple is contained in
exactly
$c_{a,b}=a!(b-a)!a!(n-a-b)!$
maximal chains, and hence
$N_{a,b}c_{a,b}=n!$.

Now sum, over all maximal chains, the weight \(1/c_{a,b}\) for each
monochromatic arithmetic triple of ranks \((a,b,a+b)\) lying on the chain.
Every concrete monochromatic triple contributes total weight \(1\), so this
weighted sum is exactly \(M_{\mathrm{arith}}(\chi)\). Since each maximal chain
contains one monochromatic triple of ranks \((iq,jq,(i+j)q)\), we get
\[
M_{\mathrm{arith}}(\chi)
\geq
n!\min_{\substack{1\leq i<j\\ i+j\leq9}}\frac1{c_{iq,jq}}
=
\min_{\substack{1\leq i<j\\ i+j\leq9}}N_{iq,jq}.
\]
By Stirling's formula,
\[
N_{iq,jq}
=
2^{\left[
-\frac{i}{10}\log_2\frac{i}{10}
-\frac{j-i}{10}\log_2\frac{j-i}{10}
-\frac{i}{10}\log_2\frac{i}{10}
-\left(1-\frac{i+j}{10}\right)\log_2\left(1-\frac{i+j}{10}\right)
+o(1)\right]n}.
\]
A direct comparison of the finitely many admissible pairs \(1\leq i<j\) with
\(i+j\leq9\) shows that the minimum is attained at
\((i,j)=(1,2)\) and \((i,j)=(1,8)\). Equivalently, the four parts in the corresponding multinomial coefficient have normalized sizes \(1/10,1/10,1/10,7/10\). This gives
\[
\delta=
-3\cdot\frac1{10}\log_2\frac1{10}
-\frac7{10}\log_2\frac7{10}
\approx 1.356779.
\]
Therefore, \(M_{\mathrm{arith}}(\chi)\geq2^{\delta n+o(n)}\) for every
two-coloring \(\chi\), which proves the lower bound.
\end{proof}
Combining Theorem~\ref{thm:main} and Proposition~\ref{prop:global-bounds}, we obtain the following theorem.
\begin{theorem}\label{thm:arith-main-section}
The arithmetic threshold equals $9$, the number of arithmetic induced $3$-chains satisfies
\[
|E_n|=\frac{4^n}{\sqrt{\pi n}}\bigl(1+o(1)\bigr),
\]
and
\[
2^{\delta n+o(n)}
\leq M^{\mathrm{arith}}_2(B_n)
\leq 2^{\gamma n+o(n)}.
\]
\end{theorem}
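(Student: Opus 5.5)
Theorem~\ref{thm:arith-main-section} has three parts. The enumeration $|E_n|=\frac{4^n}{\sqrt{\pi n}}(1+o(1))$ is already established in the proof of Theorem~\ref{thm:main}, and the exponential bounds are exactly Proposition~\ref{prop:global-bounds}, so both can be quoted. The only substantive new step is the claim that the two-color threshold equals $9$. I would prove that the least $n$ for which every two-coloring of $B_n$ contains a monochromatic member of $E_n$ is $n=9$, by reducing to the weak Schur number $WS(2)=8$, as the lower-bound argument in Proposition~\ref{prop:global-bounds} does.

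\emph{Upper bound ($n=9$ forces a monochromatic triple).} Take any two-coloring $\chi$ of $B_9$ and fix one maximal chain $\emptyset=A_0\subsetneq A_1\subsetneq\cdots\subsetneq A_9=[9]$. Color the integer $i\in\{1,\dots,9\}$ by $\chi(A_i)$. Since $WS(2)=8$, this coloring of $[9]$ has $i<j$ with $i+j\le 9$ and $i,j,i+j$ all of one color. Then $(A_i,A_j,A_{i+j})$ satisfies $A_i\subsetneq A_j\subsetneq A_{i+j}$ and $|A_i|+|A_j|=|A_{i+j}|$, so it is a monochromatic member of $E_9$. The same argument applies to every $n\ge 9$, since a member of $E_9$ inside a copy of $B_9$ on the first nine ground elements is also a member of $E_n$.

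\emph{Lower bound ($B_8$ admits a good coloring).} Let $c:[8]\to\{0,1\}$ be a weak-Schur-free coloring, for instance $\{1,2,4,8\}$ versus $\{3,5,6,7\}$; one checks directly that it has no $i<j$ with $i,j,i+j$ monochromatic. Color $A\in B_8$ by $c(|A|)$, and give $\emptyset$ an arbitrary color, which is harmless because $|S|\ge 1$ in every member of $E_n$. Any member $(S,T,U)\in E_8$ has ranks $k<m$ with $k+m\le 8$, so $k,m,k+m\in[8]$ are distinct and not all of one color. Hence $\chi$ has no monochromatic member of $E_8$, and by restriction none for any $n\le 8$.

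Both directions therefore reduce to $WS(2)=8$. That value I would either cite as classical or verify in a short case check: forcing the colors of $1$ and $2$ determines the coloring of $[8]$ up to the displayed one, and then every extension to $9$ fails. I expect this verification to be the only step requiring care. Everything else is bookkeeping that combines the two parts with Theorem~\ref{thm:main} and Proposition~\ref{prop:global-bounds}.
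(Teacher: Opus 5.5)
Your proof is correct. For the enumeration and the exponential bounds you do what the paper does: you quote the proof of Theorem~\ref{thm:main} and Proposition~\ref{prop:global-bounds}.

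The difference is in the threshold claim. The paper's proof of this statement is only the sentence ``combining Theorem~\ref{thm:main} and Proposition~\ref{prop:global-bounds}''. However, the proof it gives for Theorem~\ref{thm:main} establishes only the formula for $|E_n|$, and the threshold is never argued directly. The forcing direction appears only implicitly: the maximal-chain argument in the lower bound of Proposition~\ref{prop:global-bounds}, run with $q=1$ instead of $q=\lfloor n/10\rfloor$, is exactly your argument that every two-coloring of $B_n$ with $n\ge 9$ has a monochromatic member of $E_n$. The matching construction showing that $9$ is least does not appear in the paper at all. Your rank coloring $A\mapsto c(|A|)$, built from the extremal weak-Schur coloring $\{1,2,4,8\}$ versus $\{3,5,6,7\}$ of $[8]$, supplies that missing half. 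Your handling of $\emptyset$ is also right, since $S=\emptyset$ would force $|T|=|U|$. So your write-up is the more complete one, and like the paper it rests on $WS(2)=8$.

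Your sketched verification of $WS(2)=8$ is loosely phrased: fixing the colors of $1$ and $2$ does not by itself determine the rest, and you still have to branch on $3$, $4$ and $5$. The branching is short, though:
\begin{itemize}
\item If $1$ and $2$ get different colors, every branch fails inside $[7]$.
\item If they agree, $3$ is forced to the other color, and giving $4$ the color of $3$ fails inside $[8]$.
\item Then $5,6,8,7$ are forced in turn, and $9$ is blocked by $1+8$ and $3+6$.
\end{itemize}
This is a presentational point, not a gap.
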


\section{Bounds for the poset multiplicity}\label{sec_3}

Let $r,t\geq1$ be integers, and let $\mathcal Q=\{Q_a:a\geq1\}$ be a family of finite posets such that $Q_a\subseteq Q_{a+1}$ and $|Q_a|<|Q_{a+1}|$ for all $a\geq1$. Recall that $\mathcal C_t(Q)$ is the set of all $t$-chains in a poset $Q$. For a finite poset $P$, let $\mathcal N(Q_n,P)$ be the number of weak copies of $P$ in $Q_n$.

\subsection{Bounds for strong poset multiplicity}

For posets $Q,R$ and a strong subposet $P'\subseteq Q$, define:
\begin{itemize}
\item $\mathcal N^\sharp(Q,R)$: the number of strong subposets of $Q$ isomorphic to $R$;
\item $\mathcal N^\sharp(Q,P';R)$: the number of strong subposets of $Q$ isomorphic to $R$ that contain $P'$.
\end{itemize}
Let $P_1,\dots,P_r$ be finite posets, let $n_0=\operatorname R^\sharp_{r,t}(\mathcal Q\mid P_1,\dots,P_r)$ be the strong poset Ramsey number, and set
\[
s_0^\sharp=\operatorname M^\sharp_{r,t}(Q_{n_0}\mid P_1,\dots,P_r).
\]
For a fixed $n\geq n_0$, define
\[
D^\sharp=\max\left\{1,\mathcal N^\sharp(Q_n,P_i';Q_{n_0})\mid 1\leq i\leq r,\ P_i'\subseteq Q_n,\ P_i'\cong P_i,\ P_i'\text{ strong}\right\}.
\]

\begin{proof}[Proof of Theorem~\ref{th-double-counting}]
Let $n\geq n_0$, and let $\chi:\mathcal C_t(Q_n)\to[r]$ be an arbitrary $r$-coloring. Let $\mathcal S^\sharp$ be the set of strong copies $S\subseteq Q_n$ with $S\cong Q_{n_0}$, so $|\mathcal S^\sharp|=\mathcal N^\sharp(Q_n,Q_{n_0})$.

For each $S\in\mathcal S^\sharp$, let $\chi_S$ be the restriction of $\chi$ to $\mathcal C_t(S)$. By the definition of $s_0^\sharp$ as a minimum over all $r$-colorings, not necessarily exact ones,
\[
\sum_{i=1}^r N_i^\sharp(\chi_S)\geq s_0^\sharp
\]
for all $S\in\mathcal S^\sharp$. Summing over $S$ yields
\[
\sum_{S\in\mathcal S^\sharp}\sum_{i=1}^rN_i^\sharp(\chi_S)
\geq s_0^\sharp\mathcal N^\sharp(Q_n,Q_{n_0}).
\]
The left-hand side counts pairs $(S,P_i')$, where $S\in\mathcal S^\sharp$, $P_i'\subseteq S$, $P_i'\cong P_i$, and $P_i'$ is monochromatic of color $i$. Hence it is at most
\[
D^\sharp\sum_{i=1}^rN_i^\sharp(\chi).
\]
Therefore
\[
\sum_{i=1}^rN_i^\sharp(\chi)
\geq s_0^\sharp\frac{\mathcal N^\sharp(Q_n,Q_{n_0})}{D^\sharp}.
\]
Taking the minimum over all $\chi$ proves the theorem.
\end{proof}

For a finite poset $P$, write
$\tau_t(P)=|\mathcal C_t(P)|$
for the number of $t$-chains in $P$. In the following results we assume $\tau_t(P)\geq1$.

\begin{proposition}\label{pro-UUB}
For any integer $n\geq1$ and $Q_n\in\mathcal Q$,
\[
\operatorname M_{r,t}^\sharp(Q_n\mid P)
\leq\frac{\mathcal N^\sharp(Q_n,P)}{r^{\tau_t(P)-1}}.
\]
\end{proposition}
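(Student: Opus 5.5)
We use the first-moment method: choose a coloring $\chi:\mathcal C_t(Q_n)\to[r]$ uniformly at random, each $t$-chain independently receiving a color uniform on $[r]$. By linearity of expectation, $\mathbb E\bigl[\sum_{i=1}^r N_i^\sharp(\chi)\bigr]$ is a sum over strong copies $P'\subseteq Q_n$ with $P'\cong P$ (there are $\mathcal N^\sharp(Q_n,P)$ of them) of the probability that $P'$ is monochromatic of some color $i$. This expected total is therefore an upper bound for the minimum $\operatorname M_{r,t}^\sharp(Q_n\mid P)$, since some coloring attains at most the average.

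For a fixed strong copy $P'$, the $t$-chains of $P'$ are exactly the $t$-chains of $Q_n$ contained in $P'$. This holds because $P'$ is an induced subposet, so comparability inside $P'$ agrees with comparability in $Q_n$, and $\mathcal C_t(P')$ is a set of $\tau_t(P')=\tau_t(P)$ distinct elements of $\mathcal C_t(Q_n)$. The events ``$P'$ is monochromatic of color $i$'' for different $i$ are disjoint, because $\tau_t(P)\ge1$. Each has probability $r^{-\tau_t(P)}$ by independence over the $\tau_t(P)$ distinct chains. So the probability that $P'$ contributes to $\sum_i N_i^\sharp(\chi)$ is
\[
r\cdot r^{-\tau_t(P)}=r^{-(\tau_t(P)-1)}.
\]

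Summing over the $\mathcal N^\sharp(Q_n,P)$ strong copies gives $\mathbb E\sum_i N_i^\sharp(\chi)=\mathcal N^\sharp(Q_n,P)/r^{\tau_t(P)-1}$. Hence some coloring achieves at most this value, which proves the proposition.

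The argument has no real obstacle. One point to check is the counting convention: the strong multiplicity counts induced subposets (copies) rather than embeddings, and the statement uses $\mathcal N^\sharp(Q_n,P)$ with the same convention, so the two counts are consistent. The other point is that $\mathcal C_t(P')$ must be identified with a subset of $\mathcal C_t(Q_n)$, which is what makes the independence computation valid. If $Q_n$ contains no strong copy of $P$, both sides are $0$ and the bound is trivial.
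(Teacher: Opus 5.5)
Your proof is correct and is the paper's argument: color $\mathcal C_t(Q_n)$ uniformly and independently at random, note that a fixed induced copy is monochromatic with probability $r\cdot r^{-\tau_t(P)}$, and apply linearity of expectation. Your extra checks, that $\mathcal C_t(P')$ consists of exactly $\tau_t(P)$ distinct chains of $Q_n$ and that the per-color events are disjoint, spell out steps the paper leaves implicit.
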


\begin{proof}
Let $\chi:\mathcal C_t(Q_n)\to[r]$ be a uniform independent random $r$-coloring of the $t$-chains of $Q_n$. For each induced copy $F$ of $P$ in $Q_n$, let $A_F$ be the event that all $t$-chains of $F$ have one common color. Since $F\cong P$, it contains exactly $\tau_t(P)$ $t$-chains. Hence
\[
\mathbb P(A_F)=r\left(\frac1r\right)^{\tau_t(P)}=\frac1{r^{\tau_t(P)-1}}.
\]
Let $X$ be the number of monochromatic induced copies of $P$. By linearity of expectation,
\[
\mathbb E (X)=\mathcal N^\sharp(Q_n,P)\cdot\frac1{r^{\tau_t(P)-1}}.
\]
Therefore some coloring has at most this many monochromatic induced copies, and the desired upper bound follows.
\end{proof}

\begin{corollary}
For the Boolean lattice family $\mathcal Q=\mathcal B=\{B_n:n\geq1\}$,
\[
\operatorname M_{r,t}^\sharp(B_n\mid P)
\leq\frac{\mathcal N^\sharp(B_n,P)}{r^{\tau_t(P)-1}}.
\]
\end{corollary}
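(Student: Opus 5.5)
This is an immediate specialization of Proposition~\ref{pro-UUB} to the nested family $\mathcal Q=\mathcal B=\{B_n:n\geq1\}$. I would first check that $\mathcal B$ satisfies the hypotheses placed on $\mathcal Q$ at the start of Section~\ref{sec_3}. Identify $B_a$ with $2^{[a]}$ and regard it as the strong subposet of $B_{a+1}=2^{[a+1]}$ consisting of the subsets avoiding $a+1$. Then $B_a\subseteq B_{a+1}$, and $|B_a|=2^a<2^{a+1}=|B_{a+1}|$. So $\mathcal B$ is an admissible nested family.

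Next, I would apply Proposition~\ref{pro-UUB} with $Q_n=B_n$, keeping the standing assumption $\tau_t(P)\geq1$. This gives the displayed inequality directly.

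For completeness, I would recall why the proposition applies. Take a uniform independent random $r$-coloring of $\mathcal C_t(B_n)$. Each induced copy $F\cong P$ contains exactly $\tau_t(P)$ $t$-chains, and these are distinct elements of $\mathcal C_t(B_n)$. Hence $F$ is monochromatic with probability $r\cdot r^{-\tau_t(P)}$. Linearity of expectation then bounds the expected number of monochromatic induced copies by $\mathcal N^\sharp(B_n,P)/r^{\tau_t(P)-1}$, and some coloring attains at most the expectation.

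There is no real obstacle here; the only point needing care is verifying the nesting hypotheses, which is routine. If $\mathcal N^\sharp(B_n,P)=0$, both sides are $0$ and the inequality holds trivially.
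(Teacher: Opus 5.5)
Your proposal is correct and matches the paper's approach: the paper obtains this corollary directly by specializing Proposition~\ref{pro-UUB}, the first-moment bound for a uniform random coloring of $\mathcal C_t(Q_n)$, to $Q_n=B_n$. Your explicit check that $\mathcal B$ is an admissible nested family is a small extra that the paper leaves implicit.
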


\begin{corollary}
If $t=1$ and $s=|P|$, then $\tau_1(P)=s$, and
\[
\operatorname M_r^\sharp(Q_n\mid P)
\leq\frac{\mathcal N^\sharp(Q_n,P)}{r^{s-1}}.
\]
\end{corollary}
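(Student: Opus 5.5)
This is the special case $t=1$ of Proposition~\ref{pro-UUB}, so the plan is a direct specialization rather than a new argument. For $t=1$ the strict $1$-chains of a poset are exactly its single elements. Hence $\mathcal C_1(P)$ is in bijection with $P$, and $\tau_1(P)=|P|=s$. In particular $\tau_1(P)\ge1$ whenever $P$ is nonempty, so the standing hypothesis of Proposition~\ref{pro-UUB} holds.

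Substituting $t=1$ and $\tau_1(P)=s$ into the bound of Proposition~\ref{pro-UUB} gives
\[
\operatorname M_{r,1}^\sharp(Q_n\mid P)\le \frac{\mathcal N^\sharp(Q_n,P)}{r^{s-1}}.
\]
By the notational convention for $t=1$, the left-hand side is written $\operatorname M_r^\sharp(Q_n\mid P)$.

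For a self-contained check, one can instead rerun the random-coloring argument directly. Color each element of $Q_n$ independently and uniformly with one of $r$ colors. A fixed induced copy of $P$ has $s$ elements, so it is monochromatic with probability $r\cdot r^{-s}=r^{1-s}$. Linearity of expectation then bounds the expected number of monochromatic induced copies by $\mathcal N^\sharp(Q_n,P)\,r^{1-s}$, and some coloring achieves at most this expectation.

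There is no real obstacle in this argument. The only point needing care is that strict $1$-chains are singletons, so that $\tau_1(P)=s$ holds exactly and not merely as a bound.
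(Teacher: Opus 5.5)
Your proposal is correct and matches the paper, which gives no separate proof and treats this corollary as the specialization $t=1$ of Proposition~\ref{pro-UUB}, using that strict $1$-chains are singletons so $\tau_1(P)=|P|=s$. Your optional direct rerun of the random-coloring argument is exactly the paper's proof of that proposition, restricted to coloring the elements of $Q_n$.
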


\subsection{A Fourier--M\"obius framework for finite posets}\label{sec:fourier-posets}
This subsection combines two coordinate systems on a finite poset: zeta--M\"obius
coordinates, which encode the order relation, and Laplacian Fourier
coordinates, which provide orthogonality and Parseval's identity. The counting identities require
only an orthonormal basis containing the constant function, while a Laplacian
eigenbasis gives the coefficients a spectral interpretation; see
\cite{Chung97,ODonnell14,StanleyEC1}.
We write each strict \(t\)-chain as an ordered tuple
\((x_1<\cdots<x_t)\).

\subsubsection{M\"obius coordinates and graph-Fourier coordinates}

Let \(R\) be a finite poset. We write \(\mathbb R^R\) for the real
vector space of all functions \(f:R\to\mathbb R\). The zeta operator
\(Z_R:\mathbb R^R\to\mathbb R^R\) is defined by
\((Z_Ru)(x)=\sum_{y\le_R x}u(y)\). Thus \(Z_Ru\) records the cumulative
sum of \(u\) over each principal order ideal \(\downarrow x=\{y\in R:y\le_R x\}\).

Choose a linear extension \(r_1,\ldots,r_N\) of \(R\), where \(N=|R|\).
With respect to this ordering, the matrix of \(Z_R\) is triangular with
all diagonal entries equal to \(1\). Hence \(Z_R\) is invertible. The
\textit{M\"obius function} \(\mu_R\) is the kernel of \(Z_R^{-1}\). Equivalently,
it is the unique function on comparable pairs satisfying
\(\mu_R(x,x)=1\) and, for \(x<_R y\),
\(\mu_R(x,y)=-\sum_{x\le_R z<_R y}\mu_R(x,z)\). This recurrence is
equivalent to
\begin{equation*}
 \sum_{x\le_R z\le_R y}\mu_R(x,z)
 =
 \begin{cases}
  1,& x=y,\\
  0,& x<_R y.
 \end{cases}
\end{equation*}

For \(f\in\mathbb R^R\), define its \textit{M\"obius increment} by
$$(\partial_R f)(x)=(Z_R^{-1}f)(x)=\sum_{y\le_R x}\mu_R(y,x)f(y).$$
If \(d=\partial_Rf\), then \(d=Z_R^{-1}f\), and hence \(f=Z_Rd\). Therefore,
for every \(x\in R\),
\begin{equation}\label{eq:mobius-reconstruction}
 f(x)=(Z_Rd)(x)=\sum_{y\le_R x}d(y)
      =\sum_{y\le_R x}(\partial_Rf)(y).
\end{equation}
This is the M\"obius inversion formula. Equivalently, \((\partial_Rf)(y)\)
is the order-localized contribution of \(y\), and \(f(x)\) is reconstructed
by summing all such contributions over the ideal \(\downarrow x\).

Let \(G_R=(R,E_R)\) be the undirected cover graph of \(R\). Thus
\(\{x,y\}\in E_R\) if and only if either \(x\lessdot_R y\) or
\(y\lessdot_R x\), where \(x\lessdot_R y\) means that \(x<_R y\) and no
\(z\in R\) satisfies \(x<_R z<_R y\). We equip \(\mathbb R^R\) with the
normalized inner product $$\langle f,g\rangle_R=|R|^{-1}\sum_{x\in R}f(x)g(x),$$
and we write \(\|f\|_{2,R}^2=\langle f,f\rangle_R\). The \emph{unnormalized
cover-graph Laplacian} is defined by
$$(\Delta_Rf)(x)=\sum_{y:\{x,y\}\in E_R}(f(x)-f(y)).$$

\begin{lemma}\label{lem:green-identity}
For all \(f,g\in\mathbb R^R\),
\begin{equation}\label{eq:green-identity}
 \langle f,\Delta_R g\rangle_R
 =
 \frac1{|R|}
 \sum_{\{x,y\}\in E_R}
 \bigl(f(x)-f(y)\bigr)\bigl(g(x)-g(y)\bigr).
\end{equation}
Consequently, \(\Delta_R\) is self-adjoint and positive semidefinite.
\end{lemma}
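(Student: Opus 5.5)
The plan is to prove Lemma~\ref{lem:green-identity} by directly expanding the left-hand side over the edges of the cover graph $G_R$. By definition,
\[
|R|\,\langle f,\Delta_R g\rangle_R=\sum_{x\in R} f(x)\sum_{y:\{x,y\}\in E_R}\bigl(g(x)-g(y)\bigr).
\]
The double sum runs over ordered pairs $(x,y)$ with $\{x,y\}\in E_R$. The first step is to regroup these ordered pairs by their underlying unordered edge. Each edge $\{x,y\}$ arises exactly twice, once as $(x,y)$ and once as $(y,x)$. Here one should note that $G_R$ is a simple graph with no loops, because a cover relation requires $x<_R y$ and hence $x\neq y$.

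Collecting the two contributions of an edge gives
\[
f(x)\bigl(g(x)-g(y)\bigr)+f(y)\bigl(g(y)-g(x)\bigr)=\bigl(f(x)-f(y)\bigr)\bigl(g(x)-g(y)\bigr).
\]
The right-hand side is symmetric in $x$ and $y$, so it does not depend on the orientation chosen for the edge. Summing over $E_R$ and dividing by $|R|$ then yields \eqref{eq:green-identity}.

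The two consequences follow at once. The right-hand side of \eqref{eq:green-identity} is symmetric in $f$ and $g$, so $\langle f,\Delta_R g\rangle_R=\langle \Delta_R f, g\rangle_R$, and $\Delta_R$ is self-adjoint. Setting $g=f$ gives
\[
\langle f,\Delta_R f\rangle_R=|R|^{-1}\sum_{\{x,y\}\in E_R}\bigl(f(x)-f(y)\bigr)^2\geq 0,
\]
so $\Delta_R$ is positive semidefinite.

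There is no real obstacle here. The only point needing care is the bookkeeping in the regrouping step: each unordered edge must be counted exactly twice in the ordered double sum, which relies on $G_R$ being a simple loopless graph.
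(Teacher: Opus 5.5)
Your proposal is correct and follows the paper's proof essentially verbatim: expand over ordered pairs, combine the two orientations of each unordered edge into $\bigl(f(x)-f(y)\bigr)\bigl(g(x)-g(y)\bigr)$, then obtain self-adjointness from the symmetry in $f,g$ and positive semidefiniteness by setting $g=f$. Your remark that the cover graph is loopless, so that each edge is counted exactly twice, is a small bookkeeping point that the paper leaves implicit.
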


\begin{proof}
By the definitions of the inner product and the Laplacian,
\[
 \langle f,\Delta_Rg\rangle_R
 =
 \frac1{|R|}
 \sum_{x\in R}\sum_{y:\{x,y\}\in E_R}
 f(x)\bigl(g(x)-g(y)\bigr).
\]
In this double sum, each unordered edge \(\{x,y\}\in E_R\) appears twice.
The two terms contributed by this edge are
\(f(x)(g(x)-g(y))\) and \(f(y)(g(y)-g(x))\), whose sum is
\((f(x)-f(y))(g(x)-g(y))\). Regrouping the double sum by unordered edges
gives \eqref{eq:green-identity}. The right-hand side of
\eqref{eq:green-identity} is symmetric in \(f\) and \(g\), so
\(\langle f,\Delta_Rg\rangle_R=\langle \Delta_Rf,g\rangle_R\). Thus
\(\Delta_R\) is self-adjoint. Taking \(g=f\) gives
$$\langle f,\Delta_Rf\rangle_R=|R|^{-1}\sum_{\{x,y\}\in E_R}(f(x)-f(y))^2\ge0,
$$
so \(\Delta_R\) is positive semidefinite.
\end{proof}

\begin{lemma}\label{lem:zero-eigenspace}
Let \(R_1,\ldots,R_c\) be the connected components of the cover graph
\(G_R\). Then
$\ker\Delta_R
 =
 \operatorname{span}\{\mathbf1_{R_1},\ldots,\mathbf1_{R_c}\}$.
In particular, \(\dim\ker\Delta_R=c\). Hence \(G_R\) is connected if and
only if \(\ker\Delta_R=\operatorname{span}\{\mathbf1_R\}\).
\end{lemma}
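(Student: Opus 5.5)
The plan is to use the quadratic form identity of Lemma~\ref{lem:green-identity} to characterize the kernel as exactly the functions that are constant along every edge of the cover graph $G_R$.

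\textbf{Step 1: the span lies in the kernel.} Each $\mathbf 1_{R_j}$ is in $\ker\Delta_R$. This can be checked directly from the definition of $\Delta_R$: every neighbour $y$ of $x$ in $G_R$ lies in the same component as $x$. Hence $\mathbf 1_{R_j}(x)-\mathbf 1_{R_j}(y)=0$ for every edge $\{x,y\}$, and each summand of $(\Delta_R\mathbf 1_{R_j})(x)$ vanishes.

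\textbf{Step 2: the kernel lies in the span.} Conversely, suppose $\Delta_Rf=0$. Taking $g=f$ in \eqref{eq:green-identity} gives
\[
0=\langle f,\Delta_Rf\rangle_R=|R|^{-1}\sum_{\{x,y\}\in E_R}(f(x)-f(y))^2 .
\]
So $f(x)=f(y)$ for every edge. Now take any two vertices in the same component $R_j$. They are joined by a path in $G_R$, and $f$ is constant along each edge of that path. Therefore $f$ is constant on $R_j$, say equal to $c_j$, and $f=\sum_j c_j\mathbf 1_{R_j}$.

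\textbf{Step 3: dimension.} The components $R_1,\dots,R_c$ are nonempty and pairwise disjoint. Hence the indicators $\mathbf 1_{R_j}$ have disjoint nonempty supports, so they are linearly independent, and $\dim\ker\Delta_R=c$.

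\textbf{Step 4: the connectivity criterion.} If $G_R$ is connected, then $c=1$, $R_1=R$, and $\ker\Delta_R=\operatorname{span}\{\mathbf 1_R\}$. Conversely, if $\ker\Delta_R=\operatorname{span}\{\mathbf 1_R\}$, then the kernel has dimension $1$, so $c=1$ and $G_R$ is connected.

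There is no real obstacle here. The only point needing care is in Step 2: the vanishing of the sum of squares must be propagated along paths, which relies on $G_R$ being undirected so that its components are well defined.
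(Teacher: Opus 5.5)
Your proposal is correct and follows essentially the same route as the paper. Both use the Green identity with $g=f$ to force $f$ to be constant across every edge, propagate this along paths within each component, check the reverse inclusion directly from the definition of $\Delta_R$, and conclude with linear independence of the disjointly supported indicators; your Step 4 just spells out the connectivity criterion, which the paper leaves implicit.
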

\begin{proof}
Suppose first that \(f\in\ker\Delta_R\). Then \(\Delta_Rf=0\), so by
Lemma~\ref{lem:green-identity},
\[
 0=\langle f,\Delta_Rf\rangle_R
 =
 \frac1{|R|}\sum_{\{x,y\}\in E_R}(f(x)-f(y))^2.
\]
Every summand is nonnegative, hence every summand is zero. Thus
\(f(x)=f(y)\) for every edge \(\{x,y\}\in E_R\). If two vertices lie in
the same connected component, they are joined by a path, and the preceding
edge equality along the path implies that \(f\) has the same value at the
two vertices. Hence \(f\) is constant on each connected component.

Conversely, if \(f\) is constant on each connected component, then every
edge \(\{x,y\}\in E_R\) has both endpoints in the same component, so
\(f(x)=f(y)\). Therefore, each summand in $$(\Delta_Rf)(x)=
\sum_{y:\{x,y\}\in E_R}(f(x)-f(y))$$ is zero, and hence \(\Delta_Rf=0\).
Thus the kernel consists exactly of functions that are constant on each
connected component. Such a function is uniquely of the form
\(a_1\mathbf1_{R_1}+\cdots+a_c\mathbf1_{R_c}\), and the indicator functions
\(\mathbf1_{R_1},\ldots,\mathbf1_{R_c}\) are linearly independent.
\end{proof}

Since \(\Delta_R\) is self-adjoint on the finite-dimensional real inner
product space \(\mathbb R^R\), the spectral theorem gives an orthonormal
basis of real eigenvectors. Since \(\Delta_R\) is positive semidefinite,
all eigenvalues are nonnegative. We choose such a basis
\(\psi_0,\psi_1,\ldots,\psi_{|R|-1}\) with
\(\Delta_R\psi_j=\lambda_j\psi_j\), ordered so that
\(0=\lambda_0\le\lambda_1\le\cdots\le\lambda_{|R|-1}\), and with
\(\psi_0\equiv1\). This is possible because \(\Delta_R1=0\), and because
\(\|1\|_{2,R}=1\) under the normalized inner product.

If \(G_R\) is disconnected, then it follows from Lemma~\ref{lem:zero-eigenspace} that the
zero eigenspace has dimension greater than \(1\). In that case, after
choosing \(\psi_0=1\), we choose the remaining zero-eigenvalue vectors
orthogonal to \(\psi_0\). Completing with orthonormal eigenvectors from
the positive eigenspaces gives
\[
 \operatorname{span}\{\psi_j:1\le j\le |R|-1\}
 =
 1^\perp
 =
 \{f\in\mathbb R^R:\langle f,1\rangle_R=0\}.
\]
Thus the condition \(j\ge1\) always means orthogonality to the global
constant function, not necessarily positivity of the eigenvalue. In a
disconnected cover graph, some vectors with \(j\ge1\) may still satisfy
\(\lambda_j=0\).

For \(f\in\mathbb R^R\), define its \textit{Fourier coefficient} by
\(\widehat f(j)=\langle f,\psi_j\rangle_R\). Since
\(\psi_0,\ldots,\psi_{|R|-1}\) is an orthonormal basis, we have
\begin{equation}\label{eq:fourier-parseval}
 f=\sum_{j=0}^{|R|-1}\widehat f(j)\psi_j,
 \qquad
 \|f\|_{2,R}^2=\sum_{j=0}^{|R|-1}|\widehat f(j)|^2.
\end{equation}
Moreover, \(\widehat f(0)=\langle f,1\rangle_R=|R|^{-1}\sum_{x\in R}f(x)\),
and hence
 $$\bigl\|f-\widehat f(0)1\bigr\|_{2,R}^2
 =
\sum_{j=1}^{|R|-1}|\widehat f(j)|^2.$$
The Dirichlet energy has the spectral expansion
$$
 \langle f,\Delta_Rf\rangle_R
 =
\sum_{j=0}^{|R|-1}\lambda_j|\widehat f(j)|^2.
$$
Indeed, this follows by applying \(\Delta_R\psi_j=\lambda_j\psi_j\) to
the Fourier expansion and then using orthonormality.

The M\"obius and Fourier coordinates are related explicitly. For
\(0\le j\le |R|-1\) and \(y\in R\), set
\(\beta_j(y)=|R|^{-1}\sum_{x\ge_R y}\psi_j(x)\). Using
\eqref{eq:mobius-reconstruction}, we get
\[
 \widehat f(j)
 =
 \frac1{|R|}\sum_{x\in R}f(x)\psi_j(x)
 =
 \sum_{y\in R}(\partial_Rf)(y)
 \left(\frac1{|R|}\sum_{x\ge_R y}\psi_j(x)\right)
 =
 \sum_{y\in R}(\partial_Rf)(y)\beta_j(y).
\]
Thus graph-Fourier coefficients are fixed linear combinations of the
order-localized M\"obius increments. In the estimates below we use the
orthogonal coefficients \(\widehat f(j)\), because they satisfy Parseval's
identity.

For \(R=B_n\), identified with \(\{0,1\}^n\), one may choose the Walsh
characters \(\psi_A(S)=(-1)^{|A\cap S|}\), where \(A,S\subseteq[n]\).
They form an orthonormal eigenbasis and satisfy
\(\Delta_{B_n}\psi_A=2|A|\psi_A\). For a coloring of \(t\)-chains in a
finite poset \(Q\), however, the relevant underlying poset is
\(X=\mathcal C_t(Q)\), ordered coordinatewise: if
\(C=(x_1<\cdots<x_t)\) and \(C'=(y_1<\cdots<y_t)\), then
\(C\preceq C'\) if and only if \(x_s\le_Q y_s\) for every \(s\in[t]\).
All Fourier coefficients below are taken on this chain poset \(X\) with
the normalized uniform inner product. When \(t=1\) and \(Q=B_n\), this is
the usual Walsh-Fourier expansion on \(B_n\). When \(t\ge2\), the poset
\(\mathcal C_t(B_n)\) is a different poset, and the resulting eigenbasis
should not be identified with the classical Walsh basis on \(B_n\).

\subsubsection{A lower bound for multiplicity}

Let \(P\) and \(Q\) be finite posets, and let \(t\ge1\). Put
\(Y=\mathcal C_t(P)\), \(h=|Y|=\tau_t(P)\ge1\), \(X=\mathcal C_t(Q)\), and
\(m=|X|\). Assume that \(P\) has at least one strong embedding into \(Q\).
Let \(\operatorname{Emb}^{\sharp}(P,Q)\) be the set of strong embeddings
\(\phi:P\to Q\), and write \(e^\sharp(P,Q)=|\operatorname{Emb}^{\sharp}(P,Q)|\).
Let \(a(P)=|\operatorname{Aut}(P)|\). Since strong copies are counted by
their images, and since every strong image copy of \(P\) has exactly
\(a(P)\) parametrizations by strong embeddings, we have
\(e^\sharp(P,Q)=a(P)\mathcal N^\sharp(Q,P)\).

For \(D=(x_1<\cdots<x_t)\in Y\) and
\(\phi\in\operatorname{Emb}^{\sharp}(P,Q)\), define
\(\phi_*D=(\phi(x_1)<\cdots<\phi(x_t))\in X\). Since a strong embedding
preserves and reflects order,it follows that \(\mathcal C_t(\phi(P))=\{\phi_*D:D\in
\mathcal C_t(P)\}\). Indeed, every chain in \(P\) maps to a chain in
\(\phi(P)\), and conversely any chain in \(\phi(P)\) pulls back to a chain
in \(P\) because order relations in the image are reflected by \(\phi\).

Choose a real orthonormal eigenbasis \(\psi_0,\ldots,\psi_{m-1}\) on
\(\mathbb R^X\) with \(\psi_0\equiv1\) and
\(\operatorname{span}\{\psi_j:j\ge1\}=1^\perp\). Let
\(\chi:X\to[r]\) be an arbitrary coloring. For \(i\in[r]\), define the
color indicator \(f_i(C)=\mathbf1_{\{\chi(C)=i\}}\) and its density
$$\alpha_i=\langle f_i,1\rangle_X=m^{-1}\sum_{C\in X}f_i(C).$$ Then
\(\widehat f_i(0)=\alpha_i\), \(\sum_{i=1}^r f_i=1\), and
\(\sum_{i=1}^r\alpha_i=1\). Let \(N_i^\sharp(\chi)\) be the number of
strong image copies \(F\subseteq Q\) of \(P\) for which every chain in
\(\mathcal C_t(F)\) has color \(i\).

Let \(I_m=\{0,1,\ldots,m-1\}\). For a multi-index
\(\mathbf j=(j_D)_{D\in Y}\in I_m^Y\), set
\(\operatorname{supp}(\mathbf j)=\{D\in Y:j_D\ne0\}\). Define the \textit{normalized pattern kernel}
$$ \mathcal K_{P,Q,t}(\mathbf j)
 =
 \frac1{e^\sharp(P,Q)}
\sum_{\phi\in\operatorname{Emb}^{\sharp}(P,Q)}
 \prod_{D\in Y}\psi_{j_D}(\phi_*D).$$

\begin{theorem}
\label{thm:poset-fourier-expansion}
For every coloring \(\chi:\mathcal C_t(Q)\to[r]\),
\begin{equation}\label{eq:poset-fourier-copy-count}
 \frac{\sum_{i=1}^rN_i^\sharp(\chi)}{\mathcal N^\sharp(Q,P)}
 =
 \sum_{i=1}^r\sum_{\mathbf j\in I_m^Y}
 \mathcal K_{P,Q,t}(\mathbf j)
 \prod_{D\in Y}\widehat f_i(j_D).
\end{equation}
If \(\mathbf0\) denotes the all-zero multi-index, then
\(\mathcal K_{P,Q,t}(\mathbf0)=1\), and therefore
\[
 \frac{\sum_{i=1}^rN_i^\sharp(\chi)}{\mathcal N^\sharp(Q,P)}
 =
 \sum_{i=1}^r\alpha_i^h
 +
 \operatorname{Err}_{P,Q,t}(\chi),
\]
where
\[
 \operatorname{Err}_{P,Q,t}(\chi)
 =
 \sum_{i=1}^r
 \sum_{\substack{\mathbf j\in I_m^Y\\ \mathbf j\ne\mathbf0}}
 \mathcal K_{P,Q,t}(\mathbf j)
 \prod_{D\in Y}\widehat f_i(j_D).
\]
\end{theorem}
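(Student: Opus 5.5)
The plan is to write each monochromatic-copy count as a sum over strong embeddings of a product of color indicators, and then Fourier-expand every indicator.

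\textbf{Step 1: indicator representation.} Fix $i\in[r]$. A strong image copy $F$ of $P$ is monochromatic of color $i$ exactly when $\prod_{C\in\mathcal C_t(F)}f_i(C)=1$; otherwise this product is $0$. For any $\phi\in\operatorname{Emb}^{\sharp}(P,Q)$ with $\phi(P)=F$, we showed above that $\mathcal C_t(F)=\{\phi_*D:D\in Y\}$. Moreover $D\mapsto\phi_*D$ is injective, since $\phi$ is injective and chains are ordered tuples. Hence
\[
\prod_{C\in\mathcal C_t(F)}f_i(C)=\prod_{D\in Y}f_i(\phi_*D).
\]
Each image copy has exactly $a(P)$ parametrizations, so
\[
N_i^\sharp(\chi)=\frac1{a(P)}\sum_{\phi\in\operatorname{Emb}^{\sharp}(P,Q)}\prod_{D\in Y}f_i(\phi_*D).
\]
Dividing by $\mathcal N^\sharp(Q,P)=e^\sharp(P,Q)/a(P)$ gives
\[
\frac{N_i^\sharp(\chi)}{\mathcal N^\sharp(Q,P)}=\frac1{e^\sharp(P,Q)}\sum_{\phi}\prod_{D\in Y}f_i(\phi_*D).
\]

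\textbf{Step 2: Fourier substitution.} Insert the expansion \eqref{eq:fourier-parseval}, namely $f_i(\phi_*D)=\sum_{j\in I_m}\widehat f_i(j)\psi_j(\phi_*D)$, into each of the $h$ factors. Expanding the product of $h$ finite sums gives a sum over multi-indices $\mathbf j=(j_D)_{D\in Y}\in I_m^Y$ of
\[
\prod_{D\in Y}\widehat f_i(j_D)\,\psi_{j_D}(\phi_*D).
\]
Everything is finite, so we may swap the sum over $\phi$ with the sum over $\mathbf j$. By definition, the average over $\phi$ of $\prod_D\psi_{j_D}(\phi_*D)$ is $\mathcal K_{P,Q,t}(\mathbf j)$. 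Summing over $i$ yields \eqref{eq:poset-fourier-copy-count}.

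\textbf{Step 3: the zero term.} Since $\psi_0\equiv1$, the product at $\mathbf j=\mathbf0$ equals $1$ for every $\phi$, so $\mathcal K_{P,Q,t}(\mathbf0)=1$. Also $\widehat f_i(0)=\alpha_i$, so that term contributes $\alpha_i^{|Y|}=\alpha_i^h$. All other terms make up $\operatorname{Err}_{P,Q,t}(\chi)$.

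The only delicate point is Step 1: we must check that the chains of the copy correspond bijectively to $Y$ via $\phi_*$, so that each factor appears exactly once and no chain is repeated or missed. This is exactly where the strong-embedding property was used above. The rest is bookkeeping of the normalization by $a(P)$.
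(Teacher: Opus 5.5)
Your proposal is correct and follows the paper's proof essentially step for step. You write $N_i^\sharp(\chi)/\mathcal N^\sharp(Q,P)$ as the average over strong embeddings of $\prod_{D\in Y}f_i(\phi_*D)$ via $e^\sharp(P,Q)=a(P)\,\mathcal N^\sharp(Q,P)$, substitute the Fourier expansion of each $f_i$, interchange the finite sums, and isolate the all-zero multi-index using $\psi_0\equiv1$ and $\widehat f_i(0)=\alpha_i$. One small point: the injectivity of $D\mapsto\phi_*D$ that you check is harmless but not actually needed, since $f_i$ is $\{0,1\}$-valued. What matters is only that $\{\phi_*D:D\in Y\}$ exhausts $\mathcal C_t(\phi(P))$, which is where order reflection is used.
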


\begin{proof}
Fix \(i\in[r]\). For a strong embedding \(\phi\), the product
\(\prod_{D\in Y}f_i(\phi_*D)\) is equal to \(1\) if all members of
\(\mathcal C_t(\phi(P))\) have color \(i\), and is equal to \(0\) otherwise.
Since each strong image copy has exactly \(a(P)\) parametrizations by
strong embeddings, we have
\[
 \frac1{e^\sharp(P,Q)}
 \sum_{\phi\in\operatorname{Emb}^{\sharp}(P,Q)}
 \prod_{D\in Y}f_i(\phi_*D)
 =
 \frac{N_i^\sharp(\chi)}{\mathcal N^\sharp(Q,P)}.
\]
For every \(C\in X\), the Fourier expansion gives
\(f_i(C)=\sum_{j=0}^{m-1}\widehat f_i(j)\psi_j(C)\). Substituting this
identity into the preceding average, expanding the finite product over
\(D\in Y\), and interchanging finite sums gives
\[
 \frac{N_i^\sharp(\chi)}{\mathcal N^\sharp(Q,P)}
 =
 \sum_{\mathbf j\in I_m^Y}
 \left(
 \frac1{e^\sharp(P,Q)}
 \sum_{\phi\in\operatorname{Emb}^{\sharp}(P,Q)}
 \prod_{D\in Y}\psi_{j_D}(\phi_*D)
 \right)
 \prod_{D\in Y}\widehat f_i(j_D).
\]
The expression in parentheses is \(\mathcal K_{P,Q,t}(\mathbf j)\). Summing
over \(i\) proves \eqref{eq:poset-fourier-copy-count}. If
\(\mathbf j=\mathbf0\), then every factor \(\psi_{j_D}\) is \(\psi_0=1\),
so \(\mathcal K_{P,Q,t}(\mathbf0)=1\). The corresponding coefficient
product is \(\prod_{D\in Y}\widehat f_i(0)=\alpha_i^h\). Separating this
all-zero term from all remaining multi-indices gives the stated error
decomposition.
\end{proof}

For every nonempty \(A\subseteq Y\), define
\begin{equation}\label{eq:kappa-A-def}
 \kappa_A(P,Q,t)^2
 =
 \sum_{\substack{\mathbf j\in I_m^Y\\ \operatorname{supp}(\mathbf j)=A}}
 |\mathcal K_{P,Q,t}(\mathbf j)|^2.
\end{equation}
Thus \(\kappa_A\) measures the \(L^2\)-mass of the pattern kernel on those
Fourier modes in which precisely the coordinates indexed by \(A\) lie in
\(1^\perp\), while the coordinates outside \(A\) are fixed at the global
constant mode.

\begin{proposition}
\label{prop:poset-fourier-parseval-bound}
For every coloring \(\chi:\mathcal C_t(Q)\to[r]\),
\begin{equation}\label{eq:fourier-parseval-error-bound}
 \left|
 \frac{\sum_{i=1}^rN_i^\sharp(\chi)}{\mathcal N^\sharp(Q,P)}
 -
 \sum_{i=1}^r\alpha_i^h
 \right|
 \le
 \sum_{i=1}^r
 \sum_{\emptyset\ne A\subseteq Y}
 \kappa_A(P,Q,t)\,
 \alpha_i^{h-|A|/2}(1-\alpha_i)^{|A|/2}.
\end{equation}
\end{proposition}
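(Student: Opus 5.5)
Starting from Theorem~\ref{thm:poset-fourier-expansion}, the left-hand side of \eqref{eq:fourier-parseval-error-bound} equals $|\operatorname{Err}_{P,Q,t}(\chi)|$. We group the nonzero multi-indices $\mathbf j$ by their support $A=\operatorname{supp}(\mathbf j)\neq\emptyset$. On the support, $j_D\geq1$; off the support, $j_D=0$ and $\widehat f_i(0)=\alpha_i$. Hence, for each $i$ and $A$, the contribution is
\[
\alpha_i^{h-|A|}\sum_{\substack{\mathbf j\in I_m^Y\\ \operatorname{supp}(\mathbf j)=A}}\mathcal K_{P,Q,t}(\mathbf j)\prod_{D\in A}\widehat f_i(j_D).
\]
The triangle inequality over $i$ and $A$ then reduces the claim to bounding each such inner sum by $\kappa_A(P,Q,t)\,\alpha_i^{|A|/2}(1-\alpha_i)^{|A|/2}$.

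For a fixed $A$, we apply Cauchy--Schwarz to the inner sum, viewed as an inner product over $\mathbf j$ with support exactly $A$. This gives the bound $\kappa_A(P,Q,t)$ times
\[
\Bigl(\sum_{(j_D)_{D\in A}\in\{1,\dots,m-1\}^A}\prod_{D\in A}\widehat f_i(j_D)^2\Bigr)^{1/2}=\Bigl(\sum_{j=1}^{m-1}\widehat f_i(j)^2\Bigr)^{|A|/2},
\]
since the sum factorizes over the coordinates in $A$. By the Parseval consequence stated after \eqref{eq:fourier-parseval}, $\sum_{j\geq1}\widehat f_i(j)^2=\|f_i-\alpha_i\|_{2,X}^2$. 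Because $f_i$ is $\{0,1\}$-valued, $f_i^2=f_i$, so $\|f_i\|_{2,X}^2=\alpha_i$ and the variance equals $\alpha_i-\alpha_i^2=\alpha_i(1-\alpha_i)$. The $A$-term is therefore at most
\[
\kappa_A\,\alpha_i^{h-|A|}\bigl(\alpha_i(1-\alpha_i)\bigr)^{|A|/2}=\kappa_A\,\alpha_i^{h-|A|/2}(1-\alpha_i)^{|A|/2}.
\]
Summing over $i$ and $A$ then yields \eqref{eq:fourier-parseval-error-bound}.

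The only delicate point is bookkeeping. We need the multi-indices of support exactly $A$ to be in bijection with $\{1,\dots,m-1\}^A$, so that the Cauchy--Schwarz partner sum factorizes exactly. We also need the exponent $h-|A|\geq0$ to handle $\alpha_i=0$ correctly, using the convention $0^0=1$ when $A=Y$. When $\alpha_i=0$, $f_i\equiv0$ and all terms vanish, so the bound is consistent. Everything else is the routine interchange of finite sums already justified in Theorem~\ref{thm:poset-fourier-expansion}.
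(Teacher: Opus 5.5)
Your proposal is correct and follows the paper's proof essentially step for step: you split the error term by the support $A$ of the multi-index, apply Cauchy--Schwarz against $\kappa_A(P,Q,t)$, factorize the coefficient sum over $A$, and use Parseval to get $\sum_{j\ge1}\widehat f_i(j)^2=\alpha_i(1-\alpha_i)$. The only cosmetic difference is that the paper names the centered function $g_i=f_i-\alpha_i 1$ explicitly, whereas you work with $\widehat f_i(j)$ for $j\ge1$ directly.
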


\begin{proof}
For each color \(i\), put \(g_i=f_i-\alpha_i1\). Since
\(\widehat f_i(0)=\alpha_i\), we have \(\widehat g_i(0)=0\). Since every
\(\psi_j\) with \(j\ge1\) is orthogonal to \(1\), we also have
\(\widehat g_i(j)=\widehat f_i(j)\) for \(j\ge1\). By Parseval on
\(1^\perp\),
\[
 \sum_{j=1}^{m-1}|\widehat g_i(j)|^2
 =
 \|g_i\|_{2,X}^2.
\]
Because \(f_i\) is an indicator function and has mean \(\alpha_i\),
\[
 \|g_i\|_{2,X}^2
 =
 \frac1m\sum_{C\in X}(f_i(C)-\alpha_i)^2
 =
 \alpha_i(1-\alpha_i).
\]

We now decompose the error term in Theorem~\ref{thm:poset-fourier-expansion}
according to supports. For fixed \(i\) and nonempty \(A\subseteq Y\), the
contribution of all multi-indices with support \(A\) is
\[
 T_{i,A}
 =
 \alpha_i^{h-|A|}
 \sum_{\substack{\mathbf j\in I_m^Y\\ \operatorname{supp}(\mathbf j)=A}}
 \mathcal K_{P,Q,t}(\mathbf j)
 \prod_{D\in A}\widehat g_i(j_D),
\]
where the factor \(\alpha_i^{h-|A|}\) comes from the coordinates
\(D\notin A\), for which \(j_D=0\). By Cauchy--Schwarz and
\eqref{eq:kappa-A-def},
\[
 |T_{i,A}|
 \le
 \alpha_i^{h-|A|}\kappa_A(P,Q,t)
 \left(
 \sum_{(j_D)_{D\in A}\in\{1,\ldots,m-1\}^A}
 \prod_{D\in A}|\widehat g_i(j_D)|^2
 \right)^{1/2}.
\]
The last sum factorizes as
\[
 \prod_{D\in A}\sum_{j=1}^{m-1}|\widehat g_i(j)|^2
 =
 \bigl(\alpha_i(1-\alpha_i)\bigr)^{|A|}.
\]
Therefore
\[
 |T_{i,A}|
 \le
 \kappa_A(P,Q,t)\,
 \alpha_i^{h-|A|/2}(1-\alpha_i)^{|A|/2}.
\]
Summing this estimate over all \(i\in[r]\) and all nonempty \(A\subseteq Y\)
proves \eqref{eq:fourier-parseval-error-bound}.
\end{proof}

For \(1\le k\le h\), define
$$\kappa_k(P,Q,t)=\max\{\kappa_A(P,Q,t):A\subseteq Y,\ |A|=k\}.$$ Let
\(\Delta_{r-1}=\{(\alpha_1,\ldots,\alpha_r)\in[0,1]^r:
\alpha_1+\cdots+\alpha_r=1\}\), and let \([x]_+=\max\{x,0\}\). For
\(\boldsymbol\alpha=(\alpha_1,\ldots,\alpha_r)\in\Delta_{r-1}\), set
$$
 \mathcal F_{P,Q,t,r}(\boldsymbol\alpha)
 =
 \sum_{i=1}^r\alpha_i^h
 -
 \sum_{i=1}^r\sum_{k=1}^h
 \binom hk\kappa_k(P,Q,t)
 \alpha_i^{h-k/2}(1-\alpha_i)^{k/2}.
$$

\begin{corollary}
\label{cor:poset-fourier-multiplicity-bound}
One has
\begin{equation}\label{eq:poset-fourier-multiplicity-bound}
 \operatorname M_{r,t}^{\sharp}(Q\mid P)
 \ge
 \mathcal N^{\sharp}(Q,P)
 \min_{\boldsymbol\alpha\in\Delta_{r-1}}
 \bigl[\mathcal F_{P,Q,t,r}(\boldsymbol\alpha)\bigr]_+.
\end{equation}
\end{corollary}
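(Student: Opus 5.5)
The plan is to fix an arbitrary coloring $\chi:\mathcal C_t(Q)\to[r]$ and bound $\sum_i N_i^\sharp(\chi)$ from below. Taking the minimum over $\chi$ then gives the corollary. Let $\boldsymbol\alpha=(\alpha_1,\dots,\alpha_r)$ be the color densities of $\chi$. By construction $\boldsymbol\alpha\in\Delta_{r-1}$.

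Proposition~\ref{prop:poset-fourier-parseval-bound} gives
\[
\frac{\sum_{i=1}^rN_i^\sharp(\chi)}{\mathcal N^\sharp(Q,P)}
\ge
\sum_{i=1}^r\alpha_i^h-\sum_{i=1}^r\sum_{\emptyset\ne A\subseteq Y}\kappa_A(P,Q,t)\,\alpha_i^{h-|A|/2}(1-\alpha_i)^{|A|/2}.
\]
We regroup the inner sum by $k=|A|$. For each $k$ there are exactly $\binom hk$ sets $A$ with $|A|=k$. Each such set satisfies $\kappa_A\le\kappa_k$, by the definition of $\kappa_k$ as a maximum. The weight $\alpha_i^{h-k/2}(1-\alpha_i)^{k/2}$ is nonnegative and depends only on $k$. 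Hence the subtracted term is at most
\[
\sum_{i}\sum_{k=1}^h\binom hk\kappa_k(P,Q,t)\,\alpha_i^{h-k/2}(1-\alpha_i)^{k/2}.
\]
This yields
\[
\sum_i N_i^\sharp(\chi)\ge\mathcal N^\sharp(Q,P)\,\mathcal F_{P,Q,t,r}(\boldsymbol\alpha).
\]
Note that $\alpha_i^{h-k/2}(1-\alpha_i)^{k/2}=\alpha_i^{h-k}(\alpha_i(1-\alpha_i))^{k/2}$, which matches the form stated in the introduction.

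Next we pass to the positive part. The count $\sum_iN_i^\sharp(\chi)$ is nonnegative, so $\sum_iN_i^\sharp(\chi)\ge\mathcal N^\sharp(Q,P)[\mathcal F(\boldsymbol\alpha)]_+$. We then replace $[\mathcal F(\boldsymbol\alpha)]_+$ by its minimum over $\Delta_{r-1}$. This minimum exists because $\mathcal F$ is continuous on the compact simplex; the exponents $h-k/2\ge h/2>0$ and $k/2>0$ make $\mathcal F$ continuous even at the boundary. Since the resulting bound no longer depends on $\chi$, we may take the minimum over $\chi$ on the left.

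No step is a real obstacle; the argument is bookkeeping. The two points needing care are the comparison $\kappa_A\le\kappa_k$, which requires the nonnegativity of the weights, and the fact that the minimum over $\Delta_{r-1}$ is attained. We also use that $\mathcal N^\sharp(Q,P)\ge1$, which holds because $Q$ contains a strong copy of $P$, so the normalization in Proposition~\ref{prop:poset-fourier-parseval-bound} is legitimate.
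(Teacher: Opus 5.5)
Your proposal is correct and follows the paper's proof essentially step by step. You fix a coloring $\chi$, apply Proposition~\ref{prop:poset-fourier-parseval-bound}, dominate each $\kappa_A$ by $\kappa_{|A|}$ using the $\binom hk$ count and the nonnegative weights, pass to the positive part by nonnegativity of the count, and then minimize over $\Delta_{r-1}$ and over $\chi$. Your continuity argument for attainment of the minimum is a correct addition that the paper leaves implicit.
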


\begin{proof}
Let \(\chi:\mathcal C_t(Q)\to[r]\) be arbitrary, and let
\(\boldsymbol\alpha(\chi)=(\alpha_1,\ldots,\alpha_r)\) be its color-density
vector. Proposition~\ref{prop:poset-fourier-parseval-bound} gives
\[
 \frac{\sum_{i=1}^rN_i^\sharp(\chi)}{\mathcal N^\sharp(Q,P)}
 \ge
 \sum_{i=1}^r\alpha_i^h
 -
 \sum_{i=1}^r\sum_{\emptyset\ne A\subseteq Y}
 \kappa_A(P,Q,t)
 \alpha_i^{h-|A|/2}(1-\alpha_i)^{|A|/2}.
\]
For every \(A\subseteq Y\) with \(|A|=k\), we have
\(\kappa_A(P,Q,t)\le\kappa_k(P,Q,t)\), and there are \(\binom hk\) such
subsets. Hence the right-hand side is at least
\(\mathcal F_{P,Q,t,r}(\boldsymbol\alpha(\chi))\). The left-hand side is a
normalized count and is therefore nonnegative, so it is at least
\([\mathcal F_{P,Q,t,r}(\boldsymbol\alpha(\chi))]_+\). Since
\(\boldsymbol\alpha(\chi)\in\Delta_{r-1}\), we obtain
\[
 \frac{\sum_{i=1}^rN_i^\sharp(\chi)}{\mathcal N^\sharp(Q,P)}
 \ge
 \min_{\boldsymbol\alpha\in\Delta_{r-1}}
 \bigl[\mathcal F_{P,Q,t,r}(\boldsymbol\alpha)\bigr]_+.
\]
Multiplying by \(\mathcal N^\sharp(Q,P)\) and then taking the minimum over
all colorings \(\chi\) proves \eqref{eq:poset-fourier-multiplicity-bound}.
The remaining assertions follow immediately from the same inequality.
\end{proof}

\section{Concluding Remarks}

In this paper, we introduced a general framework for poset Ramsey multiplicity, covering weak and strong embeddings in nested poset families. For the Boolean lattice $B_n$, we studied the arithmetic Schur-type subfamily
$E_n=\{(S,T,U)\in B_n^3:S\subsetneq T\subsetneq U,\ |S|+|T|=|U|\}$.
We proved that the two-color threshold for this family is $9$, obtained the exact formula
\[
|E_n|=\binom{2n}{n}-[x^n](1+x+x^2)^n-2^n+1,
\]
and showed
\[
2^{\delta n+o(n)}
\leq M^{\mathrm{arith}}_2(B_n)
\leq 2^{\gamma n+o(n)},
\]
where \(\delta\approx 1.356779\) and \(\gamma\approx 1.567837\).
We also established a general lower bound for strong poset multiplicity via double counting, a monotonicity theorem under a uniform containment hypothesis, a first-moment upper bound, and a Lov\'asz Local Lemma criterion for zero strong multiplicity.

We conclude with several natural open problems.

\begin{problem}
Determine the correct exponential rate of \(M^{\mathrm{arith}}_2(B_n)\). More precisely, does the limit
\[
\lim_{n\to\infty}\frac1n\log_2 M^{\mathrm{arith}}_2(B_n)
\]
exist? If it exists, is it equal to one of the entropy exponents above, or does
the true exponent lie strictly between \(\delta\approx 1.356779\) and
\(\gamma\approx 1.567837\)?
\end{problem}

\begin{problem}
For small host posets and small posets $P$, compute the exact value of the Ramsey multiplicity $\operatorname{RM}_{r,t}^\sharp(Q_{n_0}\mid P)$ at the Ramsey threshold $n_0$. Can flag algebra methods be adapted to posets to compute exact values or tighten asymptotic bounds?
\end{problem}

\begin{problem}
For which posets $P$ and host families $\mathcal Q$ is the bound
\[
\operatorname M_{r,t}^\sharp(Q_n\mid P)\leq\frac{\mathcal N^\sharp(Q_n,P)}{r^{\tau_t(P)-1}}
\]
asymptotically tight? Conversely, can this universal upper bound be improved for specific families of posets?
\end{problem}


\begin{thebibliography}{99}

\bibitem{AW17}
M. Axenovich, S. Walzer, Boolean lattices: Ramsey properties and embeddings, \emph{Order} 34 (2017), 287--298.

\bibitem{WinterII}
M. Axenovich, C. Winter, Poset Ramsey number $R(P,Q_n)$. II. $N$-shaped poset, \emph{Order} 41 (2024), 401--418.

\bibitem{AW23}
M. Axenovich, C. Winter, Poset Ramsey numbers: large Boolean lattice versus a fixed poset, \emph{Combin. Probab. Comput.} 32(4) (2023), 638--653.

\bibitem{BP2021}
T. Bohman, F. Peng, A construction for Boolean cube Ramsey numbers, \emph{Order} 40 (2023), 327--333.

\bibitem{BR80}
S.A. Burr, V. Rosta, On the Ramsey multiplicities of graphs--problems and recent results, \emph{J. Graph Theory} 4(4) (1980), 347--361.

\bibitem{CGLMNPV22}
F.-H. Chang, D. Gerbner, W.-T. Li, A. Methuku, D. Nagy, B. Patk\'os, M. Vizer, Rainbow Ramsey problems for the Boolean lattice, \emph{Order} 39 (2022), 453--463.

\bibitem{CCLL20}
H.-B. Chen, Y.-J. Cheng, W.-T. Li, C.-A. Liu, The Boolean rainbow Ramsey number of antichains, Boolean posets, and chains, \emph{Electron. J. Combin.} 27(4) (2020), P4.38.

\bibitem{CFSW22}
D. Conlon, J. Fox, B. Sudakov, F. Wei, Threshold Ramsey multiplicity for odd cycles, \emph{Rev. Un. Mat. Argentina} 64(1) (2022), 49--68.

\bibitem{CFSW23}
D. Conlon, J. Fox, B. Sudakov, F. Wei, Threshold Ramsey multiplicity for paths and even cycles, \emph{Eur. J. Combin.} 107 (2023), 103612.

\bibitem{Chung97}
F.R.K. Chung, \emph{Spectral Graph Theory}, CBMS Regional Conference Series in Mathematics 92, American Mathematical Society, Providence, RI, 1997.

\bibitem{CS18}
C. Cox, D. Stolee, Ramsey numbers for partially-ordered sets, \emph{Order} 35 (2018), 557--579.

\bibitem{D03}
B. Datskovsky, On the number of monochromatic Schur triples, \emph{Adv. Appl. Math.} 31 (2003), 193--198.

\bibitem{DKT91}
D. Duffus, H.A. Kierstead, W.T. Trotter, Fibres and ordered set coloring, \emph{J. Combin. Theory Ser. A} 58(1) (1991), 158--164.


\bibitem{Fox08}
J. Fox, There exist graphs with super-exponential Ramsey multiplicity constant, \emph{J. Graph Theory} 57 (2008), 89--98.

\bibitem{Goodman59}
A.W. Goodman, On sets of acquaintances and strangers at any party, \emph{Amer. Math. Monthly} 66 (1959), 778--783.

\bibitem{GRS90}
R.L. Graham, B.L. Rothschild, J.H. Spencer, \emph{Ramsey Theory}, John Wiley \& Sons, 1990.

\bibitem{GRR96}
R.L. Graham, V. R\"odl, A. Ruci\'nski, On Schur properties of random subsets of integers, \emph{J. Number Theory} 61(2) (1996), 388--408.

\bibitem{GRS99}
D.S. Gunderson, V. R\"odl, A. Sidorenko, Extremal problems for sets forming Boolean algebras and complete partite hypergraphs, \emph{J. Combin. Theory Ser. A} 88(2) (1999), 342--367.

\bibitem{GST84}
J.R. Griggs, J. Stahl, W.T. Trotter Jr., A Sperner theorem on unrelated chains of subsets, \emph{J. Combin. Theory Ser. A} 36 (1984), 124--127.

\bibitem{GMT17}
D. Gr\'osz, A. Methuku, C. Tompkins, An improvement of the general bound on the largest family of subsets avoiding a subposet, \emph{Order} 34(1) (2017), 113--125.

\bibitem{GMT23}
D. Gr\'osz, A. Methuku, C. Tompkins, Ramsey numbers of Boolean lattices, \emph{Bull. Lond. Math. Soc.} 55(2) (2023), 914--932.

\bibitem{Harary82}
F. Harary, Achievement and avoidance games for graphs, \emph{Ann. Discrete Math.} 13 (1982), 111--119.

\bibitem{JLM15}
T. Johnston, L. Lu, K.G. Milans, Boolean algebras and Lubell functions, \emph{J. Combin. Theory Ser. A} 136 (2015), 174--183.

\bibitem{JST96}
C. Jagger, P. \v{S}tov\'i\v{c}ek, A. Thomason, Multiplicities of subgraphs, \emph{Combinatorica} 16 (1996), 123--141.

\bibitem{KM}
D.J. Kleitman, G. Markowsky, On Dedekind's problem: the number of isotone Boolean functions. II, \emph{Trans. Amer. Math. Soc.} 213 (1975), 373--390.

\bibitem{KMOW25}
G. O.H. Katona, Y. Mao, K. Ozeki, Z. Wang, Ramsey numbers for partially-ordered sets, arXiv:2512.14638 [math.CO], 2025.

\bibitem{KMOWY26}
G. O.H. Katona, Y. Mao, K. Ozeki, Z. Wang, G. Yang, Boolean lattice without small rainbow subposets, arXiv:2602.00680 [math.CO], 2026.

\bibitem{LT22}
L. Lu, J.C. Thompson, Poset Ramsey numbers for Boolean lattices, \emph{Order} 39 (2022), 171--185.

\bibitem{McColm91}
G.L. McColm, A ramseyian theorem on products of trees, \emph{J. Combin. Theory Ser. A} 57(1) (1991), 68--75.

\bibitem{MN23}
E. Moss, J.A. Noel, Off-diagonal Ramsey multiplicity, arXiv:2306.17388, 2023.

\bibitem{NR84}
J. Ne\v{s}et\v{r}il, V. R\"odl, Combinatorial partitions of finite posets and lattices--Ramsey lattices, \emph{Algebra Univ.} 19 (1984), 106--119.

\bibitem{ODonnell14}
R. O'Donnell, \emph{Analysis of Boolean Functions}, Cambridge University Press, Cambridge, 2014.

\bibitem{PPSS22}
O. Parczyk, S. Pokutta, C. Spiegel, T. Szab\'o, New Ramsey multiplicity bounds and search heuristics, \emph{Found. Comput. Math.} 25 (2025), 1777--1814.

\bibitem{PRS08}
P. Parrilo, A. Robertson, D. Saracino, On the asymptotic minimum number of monochromatic $3$-term arithmetic progressions, \emph{J. Combin. Theory Ser. A} 115 (2008), 185--192.

\bibitem{Patkos20}
B. Patk\'os, On colorings of the Boolean lattice avoiding a rainbow copy of a poset, \emph{Discrete Appl. Math.} 276 (2020), 108--114.

\bibitem{Patkos26}
B. Patk\'os, 
Anti-Ramsey forbidden poset problems, arXiv:2603.10610v2 [math.CO], 2026. 

\bibitem{RZ98}
A. Robertson, D. Zeilberger, A 2-coloring of $[1,n]$ can have $(1/22)n^2+O(n)$ monochromatic Schur triples, but not less!, \emph{Electron. J. Combin.} 5 (1998), R19.

\bibitem{Rosta04}
V. Rosta, Ramsey theory applications, \emph{Electron. J. Combin.} 11 (2004), Dynamic Survey DS13, 43 pp.

\bibitem{RS23}
J. Ru\'e, C. Spiegel, The Rado multiplicity problem in vector spaces over finite fields, \emph{Finite Fields Appl.} 111 (2026), Paper No. 102782.

\bibitem{S99}
T. Schoen, The number of monochromatic Schur triples, \emph{European J. Combin.} 20 (1999), 855--866.

\bibitem{Sp77}
J. Spencer, Asymptotic lower bounds for Ramsey functions, \emph{Discrete Math.} 20 (1977), 69--76.

\bibitem{StanleyEC1}
R.P. Stanley, \emph{Enumerative Combinatorics, Volume 1}, 2nd ed., Cambridge Studies in Advanced Mathematics 49, Cambridge University Press, Cambridge, 2012.

\bibitem{SW16}
A. Saad, J. Wolf, Ramsey multiplicity of linear patterns in certain finite abelian groups, \emph{Q. J. Math.} 68(1) (2017), 125--140.

\bibitem{TrotterRamsey}
W.T. Trotter, Ramsey theory and partially ordered sets, in: R.L. Graham et al. (eds.), \emph{Contemporary Trends in Discrete Mathematics}, DIMACS Ser. Discrete Math. Theoret. Comput. Sci. 49 (1999), 337--347.

\bibitem{Walzer15}
S. Walzer, \emph{Ramsey variant of the $2$-dimension of posets}, Master Thesis, Karlsruhe Institute of Technology, 2015.

\bibitem{Winter23}
C. Winter, Poset Ramsey number $R(P,Q_n)$. I. Complete multipartite posets, \emph{Order} 41 (2024), 391--399.



\bibitem{WinterIII}
C. Winter, Poset Ramsey number $R(P,Q_n)$. III. Chain compositions and antichains, \emph{Discrete Math.} 347(7) (2024), 114031.

\end{thebibliography}
\end{document}